\newtheorem{theorem}{Theorem}[section]
\newtheorem{lemma}[theorem]{Lemma}
\newtheorem{proposition}[theorem]{Proposition}
\newtheorem{corollary}[theorem]{Corollary}
\theoremstyle{definition}
\newtheorem{example}[theorem]{Example}
\theoremstyle{remark}
\newtheorem{remark}[theorem]{Remark}
\begin{document}
	
	\title{The extended Frobenius problem for Lucas series incremented by a Lucas number}
	
	\author{Aureliano M. Robles-P\'erez\thanks{Departamento de Matem\'atica Aplicada \& Instituto de Matem\'aticas (IMAG), Universidad de Granada, 18071-Granada, Spain. \newline E-mail: \textbf{arobles@ugr.es} (\textit{corresponding author}); ORCID: \textbf{0000-0003-2596-1249}.}
		\mbox{ and} Jos\'e Carlos Rosales\thanks{Departamento de \'Algebra \& Instituto de Matem\'aticas (IMAG), Universidad de Granada, 18071-Granada, Spain. \newline E-mail: \textbf{jrosales@ugr.es}; ORCID: \textbf{0000-0003-3353-4335}.} }
	
	\date{\today}
	
	\maketitle
	
	\begin{abstract}
		We study the extended Frobenius problem for sequences of the form $\{l_a\}\cup\{l_a+l_n\}_{n\in\mathbb{N}}$ and $\{l_a+l_n\}_{n\in\mathbb{N}}$, where $\{l_n\}_{n\in\mathbb{N}}$ is the Lucas series and $l_a$ is a Lucas number. As a consequence, we show that the families of numerical semigroups associated to both sequences satisfy the Wilf's conjecture.
	\end{abstract}
	\noindent {\bf Keywords:} Lucas number, Lucas series, Frobenius problem, numerical semigroup, Ap\'ery set, Frobenius number, genus, Wilf's conjecture.
	
	\medskip
	
	\noindent{\it 2010 AMS Classification:} 11D07, 11B39 (Primary); 11A67, 05A17 (Secondary). 	
	
	\section{Introduction}
	
	Let $S\subseteq \mathbb{N}$ be the set generated by the sequence of positive integers $(a_1,\ldots,a_e)$, that is, $S=\langle a_1,\ldots,a_e \rangle = a_1{\mathbb N}+\cdots+a_e{\mathbb N}$. If $\gcd(a_1,\ldots,a_e)=1$, then it is well known that $S$ has a finite complement in $\mathbb{N}$. This fact leads to the classical problem in additive number theory called the Frobenius problem: what is the greatest integer $\mathrm{F}(S)$ which is not an element of $S$? Although this problem is solved for $e=2$ (see \cite{sylvester}), we have that it is not possible to find a polynomial formula to compute $\mathrm{F}(S)$ if $e\geq3$ (see \cite{curtis}). Therefore, many efforts have been made to obtain partial results or to develop algorithms to get the answer to this question (see \cite{alfonsin}).
	
	Another interesting question is to compute the cardinality $\mathrm{g}(S)$ of the set $\mathbb{N}\setminus S$. In fact, sometimes finding formulas for $\mathrm{F}(S)$ and $\mathrm{g}(S)$ is known as the extended Frobenius problem.
	
	Let us recall that the Lucas series (or sequence of Lucas numbers) is given by the recurrence relation $l_{n+2} = l_{n+1} + l_n$ for $n\geq0$ and the initial conditions $l_0=2,\, l_1=1$. This sequence was introduced by Lucas in \cite{lucas}.
	
	Among others, the main goal of this work is to solve the extended Frobenius problem for $S$ generated by Lucas series incremented by a Lucas number. That is, if $\{l_0,l_1,\ldots,l_n,\ldots\}$ is a Lucas series and $f_a$ is a Lucas number, then we will consider $S(a)=\langle l_a,l_a+l_0,l_a+l_1,\ldots,l_a+l_n,\ldots\rangle$. For instance, $S(0) = \langle 2,3 \rangle$, $S(1) = \langle 1 \rangle = \mathbb{N}$, $S(2) = \langle 3,4,5 \rangle$, $S(3) = \langle 4,5,6,7 \rangle$, and so on. Thus, our work can be considered along the lines of \cite{fel}. By the way, observe that in \cite{fel} the author always considers sequences of three numbers and we do not.	
	
	In order to achieve our purpose, we will use the theory of numerical semigroups (see Section~\ref{ns} for several results of this theory), which is closely related with the Frobenius problem. Indeed, the sets $S(a)$ defined above are numerical semigroups. In Section~\ref{msgS(a)} we will determine that, if $a\in\mathbb{N}\setminus\{0,1\}$, then $\{l_a,l_a+l_0,l_a+l_1,\ldots,l_a+l_{a-1}\}$ is the minimal finite subsequence that generates $S(a)$; applying that the Lucas series is a number system, in Section~\ref{aperyS(a)} we will explicitly give the Ap\'ery sets related to our numerical semigroups; and in Sections~\ref{frobeniusS(a)} and \ref{genusS(a)} we will give the formulas for solving the extended Frobenius problem; in addition, as a result derived from our study, we will check that our family of numerical semigroups satisfies the Wilf's conjecture (see \cite{wilf}). Finally, in Section~\ref{another}, we will analyse the numerical semigroups $T(a)$ generated by the sequences $\{l_a+l_0,l_a+l_1,\ldots,l_a+l_n,\ldots \}$; in particular, we will see that $S(a)=T(a) \cup \{l_a,2l_a+1\}$.

	\section{Preliminaries: numerical semigroups}\label{ns}
	
	Let $\mathbb{Z}$ be the set of integers and $\mathbb{N} = \{ z\in\mathbb{Z} \mid z\geq 0\}$. A submonoid of $(\mathbb{N},+)$ is a subset $M$ of $\mathbb{N}$ such that is closed under addition and contains de zero element. A \textit{numerical semigroup} is a submonoid of $(\mathbb{N},+)$ such that $\mathbb{N}\setminus S=\{n\in\mathbb{N} \mid n\not\in S\}$ is finite.
	
	Let $S$ be a numerical semigroup. From the finiteness of $\mathbb{N}\setminus S$, we can define two invariants of $S$. Namely, the \textit{Frobenius number of $S$} is the greatest integer that does not belong to $S$, denoted by $\mathrm{F}(S)$, and the \textit{genus of $S$} is the cardinality of $\mathbb{N}\setminus S$, denoted by $\mathrm{g}(S)$.
	
	If $X$ is a non-empty subset of $\mathbb{N}$, then we denote by $\langle X \rangle$ the submonoid of $(\mathbb{N},+)$ generated by $X$, that is,
	\[ \langle X \rangle=\big\{\lambda_1x_1+\cdots+\lambda_nx_n \mid n\in\mathbb{N}\setminus \{0\}, \ x_1,\ldots,x_n\in X, \ \lambda_1,\ldots,\lambda_n\in \mathbb{N}\big\}. \]
	It is well known (see Lemma~2.1 of \cite{springer}) that $\langle X \rangle$ is a numerical semigroup if and only if $\gcd(X)=1$.
	
	If $S$ is a numerical semigroup and $S=\langle X \rangle$, then we say that $X$ is a \textit{system of generators of $S$}. Moreover, if $S\not=\langle Y \rangle$ for any subset $Y\subsetneq X$, then we say that $X$ is a \textit{minimal system of generators of $S$}. In Theorem~2.7 of \cite{springer} it is shown that each numerical semigroup admits a unique minimal system of generators and that such a system is finite. We denote by $\mathrm{msg}(S)$ the minimal system of generators of $S$. The cardinality of $\mathrm{msg}(S)$, denoted by $\mathrm{e}(S)$, is the \textit{embedding dimension of $S$}.
	
	The (extended) Frobenius problem for a numerical semigroup $S$ consists of finding formulas that allow us to compute $\mathrm{F}(S)$ and $\mathrm{g}(S)$ in terms of $\mathrm{msg}(S)$. As in the case of the Frobenius problem for sequences, such formulas are well known for $\mathrm{e}(S)=2$ (see \cite{sylvester}), but it is not possible to find polynomial formulas when $e(S)\geq3$ (see \cite{curtis}), except for particular families of numerical semigroups.
	
	If $n\in S\setminus\{0\}$, then a very useful tool to describe a numerical semigroup $S$ is the set $\mathrm{Ap}(S,n)=\{s\in S \mid s-n\not\in S\}$, called the \textit{Ap\'ery set of $n$ in $S$} (after \cite{apery}). The following result is Lemma~2.4 of \cite{springer}.
	
	\begin{proposition}\label{prop1}
		Let $S$ be a numerical semigroup and $n\in S\setminus\{0\}$. Then the cardinality of $\mathrm{Ap}(S,n)$ is $n$. Moreover,
		\[ \mathrm{Ap}(S,n)=\{w(0)=0, w(1), \ldots, w(n-1)\}, \] 
		where $w(i)$ is the least element of $S$ congruent with $i$ modulo $n$.
	\end{proposition}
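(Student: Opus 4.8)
The plan is to first check that the numbers $w(i)$ are well defined, then establish the claimed set equality by a double inclusion, and finally deduce the cardinality statement from the fact that the $w(i)$ sit in pairwise distinct residue classes modulo $n$.

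First I would use the defining property of a numerical semigroup: since $\mathbb{N}\setminus S$ is finite, for each $i\in\{0,1,\ldots,n-1\}$ the arithmetic progression $i,i+n,i+2n,\ldots$ eventually lies entirely in $S$, so the set of elements of $S$ congruent to $i$ modulo $n$ is a nonempty subset of $\mathbb{N}$ and hence has a least element; call it $w(i)$, and note $w(0)=0$ because $0\in S$. For the inclusion $\{w(0),\ldots,w(n-1)\}\subseteq\mathrm{Ap}(S,n)$, fix $i$ and consider $w(i)-n$: either it is negative, in which case it is trivially not in $S$, or it is a nonnegative integer congruent to $i$ modulo $n$ that is strictly smaller than $w(i)$, hence not in $S$ by minimality of $w(i)$. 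In both cases $w(i)-n\notin S$, so $w(i)\in\mathrm{Ap}(S,n)$.

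For the reverse inclusion, take $s\in\mathrm{Ap}(S,n)$ and let $i$ be the residue of $s$ modulo $n$, so $0\le i\le n-1$. Minimality of $w(i)$ gives $s\ge w(i)$, and $s-w(i)$ is a nonnegative multiple of $n$. If this multiple were positive, then $s-n=w(i)+\big(s-w(i)-n\big)$ would be the sum of $w(i)\in S$ and a nonnegative multiple of $n\in S$, hence $s-n\in S$, contradicting $s\in\mathrm{Ap}(S,n)$; therefore $s=w(i)$. This proves $\mathrm{Ap}(S,n)=\{w(0),w(1),\ldots,w(n-1)\}$. Finally, the elements $w(0),\ldots,w(n-1)$ lie in pairwise distinct residue classes modulo $n$, so they are pairwise distinct and $\mathrm{Ap}(S,n)$ has exactly $n$ elements.

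I do not expect a genuine obstacle in this argument; the only point that needs a word of justification is the well-definedness of $w(i)$, which is precisely where the finiteness of $\mathbb{N}\setminus S$ enters, and it could alternatively be phrased by invoking that $S$ contains every sufficiently large integer.
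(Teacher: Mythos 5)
Your proof is correct and complete: the well-definedness of $w(i)$ via finiteness of $\mathbb{N}\setminus S$, the double inclusion, and the distinctness of residues are exactly the points that need to be made. The paper itself gives no proof of this statement (it is quoted as Lemma~2.4 of the reference \cite{springer}), and your argument is essentially the standard one found there, so there is nothing to flag.
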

	
	The knowledge of $\mathrm{Ap}(S,n)$ allows us to solve the problem of membership of an integer to the numerical semigroup $S$. In fact, if $x\in\mathbb{Z}$, then $x\in S$ if and only if $x\geq w(x\bmod n)$. Moreover, we have the following result from \cite{selmer}.
	
	\begin{proposition}\label{prop2}
		Let $S$ be a numerical semigroup and let $n\in S\setminus\{0\}$. Then
		\begin{enumerate}
			\item $\mathrm{F}(S)=\max(\mathrm{Ap}(S,n))-n$,
			\item $\mathrm{g}(S)=\frac{1}{n}(\sum_{w\in \mathrm{Ap}(S,n)} w)-\frac{n-1}{2}$.
		\end{enumerate}
	\end{proposition}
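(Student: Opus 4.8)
The plan is to derive both formulas directly from the membership criterion recalled just before the statement: for every $x\in\mathbb{Z}$ one has $x\in S$ if and only if $x\geq w(x\bmod n)$, where, by Proposition~\ref{prop1}, $\mathrm{Ap}(S,n)=\{w(0)=0,w(1),\dots,w(n-1)\}$ and $w(i)$ is the least element of $S$ congruent with $i$ modulo $n$. The key observation I would isolate first is that this criterion pins down, inside each residue class modulo $n$, exactly which nonnegative integers fail to lie in $S$: the elements of $S$ in the class of $i$ are $w(i),w(i)+n,w(i)+2n,\dots$, and hence the gaps in that class are precisely the terms of the progression $i,i+n,i+2n,\dots$ that are strictly smaller than $w(i)$, a finite set since $0\le i\le w(i)$.

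For item~(1), I would note that for each $i\in\{0,1,\dots,n-1\}$ the largest integer congruent with $i$ modulo $n$ that is not in $S$ is $w(i)-n$: indeed $w(i)-n<w(i)$ forces $w(i)-n\notin S$ by the criterion, while every integer $\ge w(i)$ in that class lies in $S$. Taking the maximum over the $n$ residue classes then yields $\mathrm{F}(S)=\max_{0\le i\le n-1}\bigl(w(i)-n\bigr)=\max(\mathrm{Ap}(S,n))-n$; the class $i=0$ contributes only $-n$, which is harmless.

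For item~(2), I would count $\mathbb{N}\setminus S$ class by class. Writing $w(i)=i+k_in$ with $k_i\in\mathbb{N}$ (legitimate because $w(i)\equiv i\pmod n$ and $w(i)\ge i$), the gaps in the class of $i$ are exactly $i,i+n,\dots,i+(k_i-1)n$, so there are $k_i=\frac{w(i)-i}{n}$ of them. Summing over $i$ and using $\sum_{i=0}^{n-1}i=\frac{n(n-1)}{2}$,
\[
\mathrm{g}(S)=\sum_{i=0}^{n-1}\frac{w(i)-i}{n}=\frac{1}{n}\sum_{w\in\mathrm{Ap}(S,n)}w-\frac{n-1}{2}.
\]

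The argument is essentially bookkeeping once the membership criterion is in hand, so I do not expect a genuine obstacle; the only points that need a little care are the inequality $w(i)\ge i$ (so that each residue class contributes a nonnegative number of gaps) and the consistent handling of the degenerate class $i=0$ in both items.
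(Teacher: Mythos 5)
Your argument is correct: the residue-class decomposition of $\mathbb{N}$ modulo $n$, together with the observation that the gaps in the class of $i$ are exactly $i, i+n, \ldots, w(i)-n$, yields both formulas, and you handle the points needing care (namely $w(i)\ge i$ and the class $i=0$) properly. The paper itself gives no proof of this proposition --- it is quoted directly from Selmer's work \cite{selmer} --- but your proof is the standard one found in the literature, so there is nothing further to reconcile.
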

	
	From this proposition, it is clear that, if we know an explicit description of $\mathrm{Ap}(S,n)$, then we have the solution of the Frobenius problem for $S$.

	\section{The minimal system of generators of $S(a)$}\label{msgS(a)}
	
	In this section, and unless otherwise indicated, we will assume that $a\in\mathbb{N}\setminus\{0,1\}$. Our main objective will be to determine the minimal system of generators of $S(a)=\langle l_a, l_a+l_0, l_a+l_1, \ldots, l_a+l_n \ldots\rangle$. First of all, let us observe that $\gcd\{l_a+l_0, l_a+l_1\} = \gcd\{l_a+2, l_a+1\} = 1$ and, therefore, $S(a)$ is a numerical semigroup.
	
	Furthermore, let us recall that the Fibonacci sequence is defined by the linear recurrence of order two $f_{n+2}=f_{n+1}+f_n$, for all $n\in\mathbb{N}$, with initial conditions $f_0=0$ and $f_1=1$. That is, $f_0=0$, $f_1=1$, $f_2=1$, $f_3=2$, $f_4=3$, $f_5=5$, $f_6=8$, and so on.
	
	\begin{lemma}\label{lem01}
		If $a\in\mathbb{N}\setminus\{0\}$ and $i\in\mathbb{N}$, then $l_{a+i} = f_{i+1}l_a + f_i l_{a-1}$.
	\end{lemma}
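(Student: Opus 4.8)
The plan is to prove the identity $l_{a+i} = f_{i+1}l_a + f_i l_{a-1}$ by induction on $i$, keeping $a\in\mathbb{N}\setminus\{0\}$ fixed. The base cases are $i=0$ and $i=1$: for $i=0$ the claim reads $l_a = f_1 l_a + f_0 l_{a-1} = 1\cdot l_a + 0\cdot l_{a-1} = l_a$, which is immediate; for $i=1$ it reads $l_{a+1} = f_2 l_a + f_1 l_{a-1} = l_a + l_{a-1}$, which is exactly the Lucas recurrence $l_{a+1}=l_a+l_{a-1}$ (valid since $a\geq 1$, so $a-1\geq 0$ and the recurrence applies).

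For the inductive step, I would assume the formula holds for two consecutive indices $i$ and $i+1$, i.e.
\[
l_{a+i} = f_{i+1}l_a + f_i l_{a-1}, \qquad l_{a+i+1} = f_{i+2}l_a + f_{i+1} l_{a-1},
\]
and then add these two equalities. Using the Lucas recurrence $l_{a+i+2} = l_{a+i+1} + l_{a+i}$ on the left-hand side and the Fibonacci recurrences $f_{i+3} = f_{i+2}+f_{i+1}$ and $f_{i+2}=f_{i+1}+f_i$ on the right-hand side, the sum collapses to $l_{a+i+2} = f_{i+3}l_a + f_{i+2}l_{a-1}$, which is the desired identity for index $i+2$. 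This closes the induction.

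I do not expect any genuine obstacle here: the statement is a standard "shift" identity expressing a Lucas number $a$ steps ahead as a Fibonacci-weighted combination of $l_a$ and $l_{a-1}$, and it falls out of a routine two-step (strong) induction because both sequences obey the same second-order recurrence. The only point requiring a word of care is the indexing at the base: one must check $i=0$ and $i=1$ separately (rather than a single base case) because the recurrence used in the inductive step reaches back two indices, and one must note that $l_{a-1}$ is well defined precisely because $a\geq 1$. Everything else is bookkeeping with the Fibonacci recurrence.
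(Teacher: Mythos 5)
Your proof is correct and follows essentially the same route as the paper: two base cases $i=0,1$ followed by a two-step induction combining the Lucas and Fibonacci recurrences. No discrepancies worth noting.
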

	
	\begin{proof}
		For $i=0$ we have that $l_{a} = l_a + 0\cdot l_{a-1} = f_1 l_a + f_0 l_{a-1}$, and for $i=1$ it is clear that $l_{a+1} = l_a + l_{a-1} = f_2 l_a + f_1 l_{a-1}$. By induction on $i$, let us now suppose that the result is true for all $j<i$. Then $l_{a+i} = l_{a+i-1} + l_{a+i-2} = (f_{i}l_a + f_{i-1} l_{a-1}) + (f_{i-1}l_a + f_{i-2} l_{a-1}) = (f_i + f_{i-1})l_a + (f_{i-1} + f_{i-2})l_{a-1} = f_{i+1}l_a + f_i l_{a-1}$.
	\end{proof}
	
	We are now ready to show the announced result on the minimal system of generators of $S(a)$.
	
	\begin{proposition}\label{prop02}
		If $a\in\mathbb{N}\setminus\{0,1\}$, then $\mathrm{msg}(S(a)) = \{ l_a, l_a+l_0, l_a+l_1, \ldots, l_a+l_{a-1}\}$.
	\end{proposition}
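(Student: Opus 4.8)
The plan is to show two things: first, that the finite set $G_a=\{l_a,\,l_a+l_0,\,l_a+l_1,\ldots,\,l_a+l_{a-1}\}$ generates $S(a)$, i.e.\ that every generator $l_a+l_n$ with $n\geq a$ is redundant; and second, that no element of $G_a$ can be dropped, i.e.\ that no element of $G_a$ lies in the submonoid generated by the others. Together with the uniqueness of the minimal system of generators (Theorem~2.7 of \cite{springer}), this gives the claim.

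For the generation part, I would argue by strong induction on $n$ that $l_a+l_n\in\langle G_a\rangle$ for all $n\geq a$. The base cases are $n=a$ and $n=a+1$. Using Lemma~\ref{lem01} with $i=n-a$, one has $l_n=l_{a+(n-a)}=f_{n-a+1}l_a+f_{n-a}l_{a-1}$; for $n=a$ this reads $l_a+l_a=2l_a$, which is in $\langle l_a\rangle$, and for $n=a+1$ one gets $l_a+l_{a+1}=l_a+(l_a+l_{a-1})=(l_a)+(l_a+l_{a-1})$, a sum of two generators since $l_{a-1}=l_{a-1}$ with $a-1\leq a-1$. For the inductive step, write $l_a+l_n=(l_a+l_{n-1})+(l_a+l_{n-2})-l_a$; since $l_a+l_{n-1}$ and $l_a+l_{n-2}$ are in $\langle G_a\rangle$ by the induction hypothesis and $l_a\in G_a$, I need this combination to have nonnegative coefficients — equivalently, to realize $l_a+l_n$ as a genuine sum. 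The clean way is: $l_a+l_n=(l_a+l_{n-1})+l_{n-2}$ and $l_{n-2}$ itself is $(l_a+l_{n-2})-l_a$; rearranging, $l_a+l_n+l_a=(l_a+l_{n-1})+(l_a+l_{n-2})$, so $l_a+l_n=(l_a+l_{n-1})+(l_a+l_{n-2})-l_a$ — this subtraction is the nuisance. A cleaner route avoiding subtraction: prove directly from Lemma~\ref{lem01} that $l_a+l_n=l_a+f_{n-a+1}l_a+f_{n-a}l_{a-1}=(f_{n-a+1}+1)l_a+f_{n-a}l_{a-1}$, and then note $l_{a-1}=(l_a+l_{a-1})-l_a=l_{a+1}-l_a$ is not directly helpful either, so instead write $f_{n-a}l_{a-1}$ using $l_a+l_{a-1}\in G_a$: since $l_a+l_{a-1}$ and $l_a$ are both in $G_a$, and $f_{n-a}\geq1$ for $n\geq a+1$... the honest approach is $l_a+l_n=f_{n-a+1}\cdot l_a+f_{n-a}\cdot l_{a-1}+l_a$, and I will replace the $l_{a-1}$ terms by a combination of $l_a+l_{a-1}$ and $-l_a$ only if the $l_a$-count stays nonnegative, which the large coefficient $f_{n-a+1}+1$ guarantees. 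So concretely: $l_a+l_n=f_{n-a}(l_a+l_{a-1})+(f_{n-a+1}+1-f_{n-a})l_a=f_{n-a}(l_a+l_{a-1})+(f_{n-a-1}+1)l_a$, and both coefficients are nonnegative integers for $n\geq a$. This is the key computation and it is short.

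For minimality, I must show that removing any single element from $G_a$ shrinks the generated semigroup. Note that $G_a$ consists of $a+1$ numbers lying in the interval $\{l_a,l_a+1,\ldots,l_a+l_{a-1}\}=\{l_a,\ldots,2l_a-1\}$ (using $l_{a-1}<l_a$, hence $l_a+l_{a-1}<2l_a$). The standard criterion: an element $m$ of a generating set $G$ is part of the minimal system iff $m$ cannot be written as a sum of two or more elements of $G$. Every nonzero sum of two or more elements of $G_a$ is at least $2l_a$, hence exceeds every element of $G_a$ (all of which are $<2l_a$). Therefore no element of $G_a$ is a sum of others, so every element is indispensable, and $G_a=\mathrm{msg}(S(a))$.

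The main obstacle is the bookkeeping in the generation step — making sure the Fibonacci-coefficient rewriting produces genuinely nonnegative coefficients and that the edge cases $n=a,\,a+1$ (where $f_{n-a-1}$ or $f_{n-a}$ may be $0$) are handled correctly; the index $n-a-1$ can be $-1$ when $n=a$, so the $n=a$ case should be treated separately (it is just $2l_a=2\cdot l_a$). Minimality, by contrast, is essentially immediate once one observes the interval containment $G_a\subseteq\{l_a,\ldots,2l_a-1\}$, which follows from $l_{a-1}<l_a$ for $a\geq2$.
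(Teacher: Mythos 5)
Your proposal is correct and, after the exploratory detours, lands on essentially the same argument as the paper: the identity $l_a+l_{a+i}=f_i(l_a+l_{a-1})+(f_{i-1}+1)l_a$ (the paper writes the second coefficient as $f_{i+1}-f_i+1$, which is the same thing and sidesteps the $i=0$ index issue you flag) shows the tail generators are redundant, and minimality follows because all $a+1$ distinct generators lie in $\{l_a,\ldots,2l_a-1\}$ while any sum of two of them is at least $2l_a$.
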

	
	\begin{proof}
		By Lemma~\ref{lem01}, if $i\in\mathbb{N}$, then $l_a + l_{a+i} = l_a + f_{i+1}l_a + f_i l_{a-1} =f_i(l_a+l_{a-1}) + (f_{i+1}-f_i+1)l_a \in \langle l_a, l_a+l_{a-1} \rangle$. Therefore, $\{ l_a, l_a+l_0, l_a+l_1, \ldots, l_a+l_{a-1}\}$ is a system of generators of $S(a)$. Since $l_a < l_a+l_1 < l_a+l_0 < l_a+l_2 < \ldots < l_a+l_{a-1} < 2l_a$, we easily conclude the proof. 
	\end{proof}
	
	As an immediate consequence of the previous proposition, we have the following result.
	
	\begin{corollary}\label{cor03}
		If $a\in\mathbb{N}\setminus\{0,1\}$, then the embedding dimension of $S(a)$ is $\mathrm{e}(S(a))=a+1$.
	\end{corollary}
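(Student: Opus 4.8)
The plan is to read the answer off directly from Proposition~\ref{prop02}. By definition, $\mathrm{e}(S(a))$ is the cardinality of $\mathrm{msg}(S(a))$, and Proposition~\ref{prop02} identifies this set as $\{l_a, l_a+l_0, l_a+l_1, \ldots, l_a+l_{a-1}\}$. This list consists of the single number $l_a$ together with the $a$ numbers $l_a+l_0, l_a+l_1, \ldots, l_a+l_{a-1}$ (indices $0$ through $a-1$), so the whole argument reduces to verifying that these $a+1$ entries are pairwise distinct; once that is done, $\mathrm{e}(S(a)) = |\mathrm{msg}(S(a))| = a+1$.

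The distinctness has in fact already been recorded inside the proof of Proposition~\ref{prop02}, via the chain
\[ l_a < l_a+l_1 < l_a+l_0 < l_a+l_2 < \cdots < l_a+l_{a-1} < 2l_a, \]
which relies only on the elementary ordering $l_1 < l_0 < l_2 < l_3 < \cdots$ of the Lucas numbers. A strictly increasing finite chain has pairwise distinct terms, so the $a+1$ minimal generators are genuinely distinct and the count is complete. I do not anticipate any real obstacle: the statement is a bookkeeping consequence of the explicit generating set produced in the preceding proposition, and the only point requiring (minimal) care is avoiding a double count, which the displayed inequality chain already precludes.
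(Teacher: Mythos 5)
Your proposal is correct and matches the paper's treatment: the corollary is drawn directly from Proposition~\ref{prop02} by counting the $a+1$ listed generators, with distinctness guaranteed by the strictly increasing chain already exhibited in that proposition's proof. No further comment is needed.
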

	
	\begin{example}\label{exmp04}
		By definition, $S(6) = \langle 18, 18+2, 18+1, 18+3, 18+4, 18+7, 18+11, 18+18, 18+29, \ldots \rangle$. By Proposition~\ref{prop02}, we know that $\mathrm{msg}(S(6)) = \{18,19,20,21,22,25,29\}$ and, therefore, $\mathrm{e}(S(6))=7$.
	\end{example}

	\section{The Ap\'ery set of $S(a)$}\label{aperyS(a)}
	
	Our main objective in this section is to prove Theorem~\ref{thm08}, which describes $\mathrm{Ap}(S(a), l_a)$.
	
	It is a well known fact that every non-negative integer can be uniquely represented as a sum of non-consecutive Fibonacci numbers (see \cite{zeckendorf}), the so-called \textit{Zeckendorf decomposition}. For Lucas numbers, an analogue result is proved in \cite{brown}. Moreover, in \cite{decomposition} it is shown that, by considering the decompositions of an integer as a sum of Fibonacci numbers, the Zeckendorf decomposition is minimal in that no other decomposition has fewer summands. The reasoning behind this fact can be applied to decompositions of integers as sums of Lucas numbers.
	
	To work with a monotone sequence, from now on we will use a modification of the Lucas series. Namely, $\{\tilde{l}_n\}_{n\in\mathbb{N}}$ is the sequence given by $\tilde{l}_0=l_1$, $\tilde{l}_1=l_0$, and $\tilde{l}_n=l_n$ for all $n\geq2$.
	
	 From the above two paragraphs, we have the following result.
	
	\begin{lemma}\label{lem00}
		If $x\in\mathbb{N}\setminus\{0\}$, then there exists a unique $k\in\mathbb{N}$ such that $x=\sum_{i=0}^k b_i \tilde{l}_i$ with $(b_0,\ldots,b_k)\in\{0,1\}^{k+1}$, $b_k=1$, $b_0b_2=0$, and $b_ib_{i+1}=0$ for all $i\in\{0,\ldots,k-1\}$. Moreover, if $x=\sum_{i=0}^{k'} c_i \tilde{l}_i$ with $(c_0,\ldots,c_{k'})\in\mathbb{N}^{k'+1}$ and $k'\in\mathbb{N}$, then $\sum_{i=0}^k b_i \leq \sum_{i=0}^{k'} c_i$.
	\end{lemma}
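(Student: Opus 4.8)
The plan is to prove the three assertions in turn: (i)~existence of a decomposition of the stated form, (ii)~uniqueness of it and of the index $k$, and (iii)~minimality of the number of summands. Together, (i)--(ii) are the Lucas analogue of Zeckendorf's theorem (\cite{zeckendorf,brown}), transported through the strictly increasing reindexing $\{\tilde l_n\}$, while (iii) follows the pattern of \cite{decomposition}. The single fact driving all three --- and the source of every small-index case distinction --- is that $\tilde l_0=1$, that $\{\tilde l_n\}$ is strictly increasing, and that the recurrence $\tilde l_n=\tilde l_{n-1}+\tilde l_{n-2}$ holds for $n=2$ and for every $n\ge 4$ but fails at $n=3$, where instead $\tilde l_0+\tilde l_2=1+3=4=\tilde l_3$. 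This last identity is precisely why the side condition $b_0b_2=0$ is needed in order to single out a unique decomposition.

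For (i), I would run the greedy algorithm: given $x\ge 1$, put $k=\max\{i:\tilde l_i\le x\}$, set $b_k=1$, replace $x$ by $x-\tilde l_k$, and repeat. When $k\ge 3$ we have $\tilde l_k\le x<\tilde l_{k+1}=\tilde l_k+\tilde l_{k-1}$, so the new value lies in $[0,\tilde l_{k-1})$ and its largest admissible index drops by at least two; when $k\le 2$ the interval $[\tilde l_k,\tilde l_{k+1})$ contains only one integer, so the value becomes $0$ and the algorithm halts. Hence the process terminates, the chosen indices are pairwise nonconsecutive, and an index in $\{0,1,2\}$ can appear only as the final one, which forces $b_0b_2=0$; so all the required conditions hold.

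For (ii), I would first prove, by strong induction on $m$, the estimate $\sum_{i=0}^{m}b_i\tilde l_i<\tilde l_{m+1}$ for every tuple $(b_0,\dots,b_m)\in\{0,1\}^{m+1}$ with $b_ib_{i+1}=0$ for all $i$ and $b_0b_2=0$; in the step with $b_m=1$ one uses $b_{m-1}=0$ together with $\tilde l_{m-1}+\tilde l_m=\tilde l_{m+1}$ (valid for $m\ge 3$), while the cases $m\le 2$ are checked by hand --- and it is exactly here that $b_0b_2=0$ excludes the competing representation $\tilde l_0+\tilde l_2$ of $\tilde l_3$. Given two admissible decompositions of $x$ with leading indices $k$ and $k'$, the estimate forces $\tilde l_k\le x<\tilde l_{k+1}$ and $\tilde l_{k'}\le x<\tilde l_{k'+1}$, whence $k=k'$ because the half-open blocks $[\tilde l_n,\tilde l_{n+1})$ partition the positive integers; subtracting $\tilde l_k$ leaves an admissible decomposition of $x-\tilde l_k<x$ (with leading index at most $k-2$ when $k\ge 3$, and the empty one when $k\le 2$), and induction on $x$ completes the uniqueness proof.

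For (iii), I would set up a terminating rewriting system on representations $x=\sum_i c_i\tilde l_i$ with $c_i\in\mathbb{N}$: to each way in which a representation fails to be admissible (some $c_i\ge 2$; some $c_ic_{i+1}\ne 0$; or $c_0c_2\ne 0$) one associates a local move, namely one of $2\tilde l_i\mapsto \tilde l_{i+1}+\tilde l_{i-2}$, $\tilde l_i+\tilde l_{i+1}\mapsto \tilde l_{i+2}$, $\tilde l_0+\tilde l_2\mapsto \tilde l_3$, together with their finitely many degenerate small-index variants ($2\tilde l_0\mapsto \tilde l_1$, $2\tilde l_1\mapsto \tilde l_3$, $2\tilde l_2\mapsto \tilde l_3+\tilde l_1$, $2\tilde l_3\mapsto \tilde l_4+\tilde l_0$, $\tilde l_1+\tilde l_2\mapsto \tilde l_3+\tilde l_0$, $\tilde l_0+\tilde l_1\mapsto \tilde l_2$). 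Each move leaves $x$ unchanged, never increases the number of summands $\sum_i c_i$, and --- when it preserves that number --- strictly increases the monovariant $\sum_i c_i 2^i$, which is bounded on the finite set of representations of $x$; hence the rewriting terminates, and since a move is available whenever the representation is not admissible, it must terminate at an admissible representation, which by (ii) is the Zeckendorf decomposition. Applying this to an arbitrary starting representation shows that the Zeckendorf decomposition uses the fewest summands. I expect this last part to be the main obstacle: not because it is deep, but because one must lay out the move set, verify the weight-monotonicity and the monovariant on each move, and dispose of the handful of degenerate small-index configurations (once again traceable to the failure of the recurrence at $n=3$); everything else is routine.
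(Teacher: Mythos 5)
Your argument is correct, and it is worth noting that the paper does not actually prove this lemma at all: it simply invokes \cite{brown} for existence and uniqueness of the Lucas--Zeckendorf decomposition and \cite{decomposition} for summand minimality, so any complete argument is necessarily ``a different route from the paper''. Your three-step plan --- greedy existence, uniqueness via the bound $\sum_{i=0}^{m}b_i\tilde l_i<\tilde l_{m+1}$ for admissible tuples together with the fact that the blocks $[\tilde l_n,\tilde l_{n+1})$ partition the positive integers, and a terminating rewriting system for minimality --- is the standard one, and every identity you lean on checks out: the recurrence $\tilde l_n=\tilde l_{n-1}+\tilde l_{n-2}$ holds for $n=2$ and $n\ge 4$ but fails at $n=3$; the identity $\tilde l_0+\tilde l_2=1+3=4=\tilde l_3$ is exactly what forces the side condition $b_0b_2=0$ (without it the estimate fails at $m=2$ and $4$ has two admissible representations); and your degenerate moves $2\tilde l_2\mapsto\tilde l_3+\tilde l_1$, $2\tilde l_3\mapsto\tilde l_4+\tilde l_0$, $\tilde l_1+\tilde l_2\mapsto\tilde l_3+\tilde l_0$, $2\tilde l_0\mapsto\tilde l_1$, $2\tilde l_1\mapsto\tilde l_3$ are all numerically correct, never increase $\sum_i c_i$, and respect the monovariant $\sum_i c_i2^i$ when the summand count is preserved, so the rewriting terminates at the unique admissible representation. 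What your approach buys, compared with the paper's bare citation, is a self-contained verification that the results of \cite{zeckendorf,brown,decomposition}, stated for the Fibonacci and Lucas sequences in their natural order, genuinely transport to the reindexed monotone sequence $\{\tilde l_n\}$ with precisely the extra condition $b_0b_2=0$ --- a point the paper glosses over with ``From the above two paragraphs, we have the following result.'' The only caveat is that your text is a detailed plan rather than a finished write-up (for instance, the $b_m=0$ case of the inductive estimate and the finiteness of the set of representations of a fixed $x$ are asserted rather than spelled out), but there is no gap in the underlying argument.
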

	
	For each $x\in\mathbb{N}\setminus\{0\}$, the unique decomposition given in the previous result will be called the \textit{Zeckendorf-Lucas decomposition of $x$}.

	If $x\in\mathbb{N}$, then we denote by 
	\[ \beta(x) = \min\left\{ \sum_{i=0}^{k} b_i \;\bigg\vert\; x=\sum_{i=0}^{k} b_i \tilde{l}_i, \mbox{ with } (b_0,\ldots,b_k)\in\mathbb{N}^{k+1}, \; k\geq0 \right\}. \]
	
	\begin{remark}\label{rem00}
		By Lemma~\ref{lem00}, it is clear that, if $x=\sum_{i=0}^k b_i \tilde{l}_i$ is the Zeckendorf-Lucas decomposition of $x\in\mathbb{N}\setminus\{0\}$, then $\beta(x)=\sum_{i=0}^k b_i$. Moreover, $\beta(0)=0$.
	\end{remark}
	
	To prove Theorem~\ref{thm08} we need three prior lemmas.
	
	\begin{lemma}\label{lem05}
		Let $a\in\{1,2,3,4\}$ and $\sum_{i=0}^{a-1} b_i \tilde{l}_i \geq \tilde{l}_a$, with $(b_0,\ldots,b_{a-1})\in\mathbb{N}^{a}$. Then we have that $\sum_{i=0}^{a-1} b_i \tilde{l}_i - \tilde{l}_a = \sum_{i=0}^{a-1} c_i \tilde{l}_i$, with $(c_0,\ldots,c_{a-1})\in\mathbb{N}^{a}$ and $\sum_{i=0}^{a-1} c_i < \sum_{i=0}^{a-1} b_i$.
	\end{lemma}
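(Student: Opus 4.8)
The key fact behind the restriction $a\in\{1,2,3,4\}$ is that on the initial segment $0\le i\le 3$ the modified Lucas sequence is just $\tilde{l}_i=i+1$ (indeed $\tilde{l}_0=l_1=1$, $\tilde{l}_1=l_0=2$, $\tilde{l}_2=l_2=3$, $\tilde{l}_3=l_3=4$), the coincidence breaking at $\tilde{l}_4=l_4=7\neq 5$. Thus for each such $a$ the generators available on the right-hand side, $\tilde{l}_0,\dots,\tilde{l}_{a-1}$, are precisely $1,2,\dots,a$; in particular $\tilde{l}_{a-1}=a$, and moreover $\tilde{l}_a\ge a+1$ (namely $\tilde{l}_1=2$, $\tilde{l}_2=3$, $\tilde{l}_3=4$, $\tilde{l}_4=7$). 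The plan is to build the proof on these two facts together with a single division with remainder; no further case analysis on $a$ beyond these tabulated values is needed.

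Set $N=\sum_{i=0}^{a-1}b_i\tilde{l}_i$ and $M=N-\tilde{l}_a\ge 0$. First, since every available coin is at most $\tilde{l}_{a-1}=a$, we have $N\le a\sum_{i=0}^{a-1}b_i$, hence $\sum_{i=0}^{a-1}b_i\ge\lceil N/a\rceil$. Second, I would exhibit an economical representation of $M$ by the coins $\tilde{l}_0,\dots,\tilde{l}_{a-1}$: write $M=qa+r$ with $0\le r\le a-1$, take $c_{a-1}=q$, and if $r\ge 1$ additionally $c_{r-1}=1$ (a valid index distinct from $a-1$, since $0\le r-1\le a-2$), all remaining $c_i$ being $0$. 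Because $\tilde{l}_{a-1}=a$ and $\tilde{l}_{r-1}=r$, this yields $M=\sum_{i=0}^{a-1}c_i\tilde{l}_i$ with $\sum_{i=0}^{a-1}c_i=q$ (if $r=0$) or $q+1$ (if $r\ge 1$), i.e.\ $\sum_{i=0}^{a-1}c_i=\lceil M/a\rceil$.

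It then remains to compare the two digit sums. Using $\tilde{l}_a\ge a+1$ and the monotonicity of $x\mapsto\lceil x/a\rceil$,
\[
\sum_{i=0}^{a-1}c_i=\left\lceil\tfrac{N-\tilde{l}_a}{a}\right\rceil\le\left\lceil\tfrac{N-(a+1)}{a}\right\rceil=\left\lceil\tfrac{N-1}{a}\right\rceil-1\le\left\lceil\tfrac{N}{a}\right\rceil-1\le\sum_{i=0}^{a-1}b_i-1<\sum_{i=0}^{a-1}b_i,
\]
which is exactly the assertion of the lemma.

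I do not expect a genuine obstacle here: once $\tilde{l}_i=i+1$ for $i\le 3$ is noted, the argument is pure floor/ceiling bookkeeping whose only numerical input is $\tilde{l}_a\ge a+1$. The one place to be careful is the description of $(c_0,\dots,c_{a-1})$ in the degenerate situations $r=0$, $M=0$, and $a=1$ (where the index $r-1$ never occurs). If one wished to avoid ceiling functions entirely, an equivalent but longer route is a short case split on which of $b_0,\dots,b_{a-1}$ vanish, in each subcase subtracting from $(b_i)$ a suitable decomposition of $\tilde{l}_a$ into the surviving coins --- e.g.\ $\tilde{l}_2=\tilde{l}_0+\tilde{l}_1$, $\tilde{l}_3=2\tilde{l}_1=\tilde{l}_0+\tilde{l}_2$, $\tilde{l}_4=\tilde{l}_2+\tilde{l}_3$ --- and checking the digit sum drops; this is entirely mechanical.
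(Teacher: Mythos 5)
Your proof is correct, but it takes a genuinely different route from the paper's. The paper argues case by case on $a$ (and, within each $a$, on the values of the top coefficients), each time replacing $\tilde{l}_a$ by an explicit combination of smaller $\tilde{l}_i$'s drawn from the given representation and checking that the coefficient sum drops; the $a=4$ case is left to the reader as ``similar''. You instead exploit the single structural fact that for these $a$ the available coins $\tilde{l}_0,\dots,\tilde{l}_{a-1}$ are exactly $1,2,\dots,a$ while $\tilde{l}_a\geq a+1$, build a fresh representation of $M=N-\tilde{l}_a$ by division with remainder (so $\sum c_i=\lceil M/a\rceil$), bound $\sum b_i$ below by $\lceil N/a\rceil$, and close the gap with ceiling arithmetic; I checked the chain $\lceil (N-\tilde{l}_a)/a\rceil\leq\lceil (N-a-1)/a\rceil=\lceil (N-1)/a\rceil-1\leq\lceil N/a\rceil-1$ and the edge cases $r=0$, $M=0$, $a=1$, and all are fine. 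What your approach buys is uniformity: one computation covers all four values of $a$ with no case explosion. What the paper's replacement-style argument buys is that it is the same mechanism reused in the inductive step for $a\geq5$ (Lemmas~\ref{lem06} and \ref{lem07}), where the coins are no longer $1,\dots,a$ and your division-with-remainder trick would not apply; your alternative ``longer route'' sketched at the end is essentially what the paper actually does.
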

	
	\begin{proof}
		If $a=1$, then $b_0\tilde{l}_0\geq\tilde{l}_1$ implies that $b_0\geq 2$ and we can take $c_0=b_2-2$ to ensure the result.
		
		For $a=2$ we have $b_0\tilde{l}_0 + b_1\tilde{l}_1\geq\tilde{l}_2$, that is, $b_0+2b_1\geq 3$. We study three cases.
		\begin{itemize}
			\item If $b_1=0$, then $b_0\geq 3$ and we have the result for $c_0=b_0-3$ and $c_1=0$.
			\item If $b_1=1$, then $b_0\geq 1$ and we can take $c_0=b_0-1$ and $c_1=0$.
			\item If $b_1\geq 2$, then it is sufficient to define $c_0=b_0+1$ and $c_1=b_1-2$.
		\end{itemize}
		
		For $a=3$, $b_0\tilde{l}_0 + b_1\tilde{l}_1 + b_2\tilde{l}_2 \geq\tilde{l}_3$ is equivalent to
		$b_0+2b_1+3b_2\geq 4$ and we see again three cases.
		\begin{itemize}
			\item If $b_2=0$, then $b_0+2b_1\geq 4$ and the proof is similar to the case $a=2$.
			\item If $b_2=1$, then $b_0+2b_1\geq 1$ (that is, $b_0+b_1\geq1$) and we can take $c_2=2$ and $c_0, c_1$ such that $c_0+c_1=b_0+b_1-1$.
			\item If $b_2\geq 2$, then it is enough to take $c_0=b_0$, $c_1=b_1+1$ and $c_2=b_2-2$.
		\end{itemize}
		
		Finally, for $a=4$ the proof follows the same ideas as for $a=3$.
	\end{proof}
	
	\begin{lemma}\label{lem06}
		Let $a\in\mathbb{N}\setminus\{0,1,2,3,4\}$ and $\sum_{i=0}^{a-1} b_i \tilde{l}_i \geq \tilde{l}_a$, with $(b_0,\ldots,b_{a-1})\in\mathbb{N}^{a}$. If ($b_{a-2}\geq 1$ and $b_{a-1}\geq 1$) or ($b_{a-2}=0$ and $b_{a-1}\geq 2$), then we have that $\sum_{i=0}^{a-1} b_i \tilde{l}_i - \tilde{l}_a = \sum_{i=0}^{a-1} c_i \tilde{l}_i$, with $(c_0,\ldots,c_{a-1})\in\mathbb{N}^{a}$ and $\sum_{i=0}^{a-1} c_i < \sum_{i=0}^{a-1} b_i$.
	\end{lemma}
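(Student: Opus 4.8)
The plan is to run everything through the observation that, since $a\ge 5$, the shifted sequence $\{\tilde l_n\}$ still satisfies the Lucas recurrence at the relevant indices: $\tilde l_a=\tilde l_{a-1}+\tilde l_{a-2}$ and $\tilde l_{a-1}=\tilde l_{a-2}+\tilde l_{a-3}$. (The second identity is exactly what fails when $a-1=3$, which is why $a\le 4$ was disposed of separately in Lemma~\ref{lem05}.) Adding these yields the key identity $2\tilde l_{a-1}=\tilde l_a+\tilde l_{a-3}$. With this in hand I would subtract $\tilde l_a$ from $\sum_{i=0}^{a-1}b_i\tilde l_i$ by ``charging'' the subtraction against the two top coefficients, splitting according to the two cases in the hypothesis; in each case the new coefficients $(c_0,\dots,c_{a-1})$ will be obtained from $(b_0,\dots,b_{a-1})$ by changing only a few entries, and the point will be to check they stay in $\mathbb{N}$ and that the coefficient sum strictly drops.

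In the first case, $b_{a-2}\ge 1$ and $b_{a-1}\ge 1$, I would use $\tilde l_a=\tilde l_{a-1}+\tilde l_{a-2}$ and set $c_{a-1}=b_{a-1}-1$, $c_{a-2}=b_{a-2}-1$, and $c_i=b_i$ for $i\le a-3$. The hypothesis makes all $c_i\in\mathbb{N}$, one checks at once that $\sum_{i=0}^{a-1}c_i\tilde l_i=\sum_{i=0}^{a-1}b_i\tilde l_i-\tilde l_a$, and $\sum_{i=0}^{a-1}c_i=\sum_{i=0}^{a-1}b_i-2<\sum_{i=0}^{a-1}b_i$.

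In the second case, $b_{a-2}=0$ and $b_{a-1}\ge 2$, I would instead use the key identity in the form $2\tilde l_{a-1}-\tilde l_a=\tilde l_{a-3}$, and set $c_{a-1}=b_{a-1}-2$, $c_{a-3}=b_{a-3}+1$, $c_{a-2}=0$, and $c_i=b_i$ otherwise. Again all $c_i\in\mathbb{N}$ (here $c_{a-1}\ge 0$ uses $b_{a-1}\ge 2$), a short computation gives $\sum_{i=0}^{a-1}c_i\tilde l_i=\sum_{i=0}^{a-1}b_i\tilde l_i-\tilde l_a$, and now $\sum_{i=0}^{a-1}c_i=\sum_{i=0}^{a-1}b_i-1<\sum_{i=0}^{a-1}b_i$. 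Note $a-3\ge 2$, so $\tilde l_{a-3}=l_{a-3}$ and no issue with the exceptional indices $0,1$ appears.

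There is no serious obstacle: the content is the identity $2\tilde l_{a-1}=\tilde l_a+\tilde l_{a-3}$ together with routine bookkeeping of which coefficients change. The only subtlety worth spelling out in the final write-up is \emph{why} $a\ge 5$ (rather than just $a\ge 4$) is needed, namely the failure of $\tilde l_{a-1}=\tilde l_{a-2}+\tilde l_{a-3}$ at $a-1=3$; this is also the reason the hypothesis is split into precisely these two sub-cases, since those are exactly the configurations of $(b_{a-2},b_{a-1})$ for which the subtraction can be absorbed while keeping the coefficients non-negative.
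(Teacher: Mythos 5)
Your proposal is correct and follows essentially the same route as the paper: the identity $2\tilde l_{a-1}=\tilde l_a+\tilde l_{a-3}$ you isolate is exactly the paper's rewriting $\tilde l_a=\tilde l_{a-1}+\tilde l_{a-2}=2\tilde l_{a-1}-\tilde l_{a-3}$, and your coefficient adjustments in both cases coincide with those in the paper's proof. Your added remark on why $a\ge 5$ (rather than $a\ge 4$) is required is accurate and a nice clarification, but it does not change the argument.
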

	
	\begin{proof}
		Let us observe that, if $a\geq 5$, then 
		\[ \sum_{i=0}^{a-1} b_i \tilde{l}_i - \tilde{l}_a = \sum_{i=0}^{a-1} b_i \tilde{l}_i - \tilde{l}_{a-2} - \tilde{l}_{a-1} = \sum_{i=0}^{a-1} b_i \tilde{l}_i + \tilde{l}_{a-3} - 2\tilde{l}_{a-1}.\]
		
		Now, if $b_{a-2}\geq 1$ and $b_{a-1}\geq 1$, then $\sum_{i=0}^{a-1} b_i \tilde{l}_i - \tilde{l}_a = \sum_{i=0}^{a-1} c_i \tilde{l}_i$, with $c_i = b_i$ for $0\leq i\leq a-3$, $c_{a-2} = b_{a-2}-1$, and $c_{a-1} = b_{a-1}-1$. Thus, the result is proven in this case.
		
		Similarly, if $b_{a-2}=0$ and $b_{a-1}\geq 2$, then $\sum_{i=0}^{a-1} b_i \tilde{l}_i - \tilde{l}_a = \sum_{i=0}^{a-1} c_i \tilde{l}_i$, with $c_i = b_i$ for $0\leq i\leq a-4$, $c_{a-3} = b_{a-3}+1$, $c_{a-2} = b_{a-2} = 0$, and $c_{a-1} = b_{a-1}-2$. So, this case is also proven.
	\end{proof}
	
	\begin{lemma}\label{lem07}
		Let $a\in\mathbb{N}\setminus\{0,1\}$ and $\sum_{i=0}^{a-1} b_i \tilde{l}_i \geq \tilde{l}_a$, with $(b_0,\ldots,b_{a-1})\in\mathbb{N}^{a}$. Then there exists $(c_0,\ldots,c_{a-1})\in\mathbb{N}^{a}$ such that $\sum_{i=0}^{a-1} b_i \tilde{l}_i = \tilde{l}_a + \sum_{i=0}^{a-1} c_i l_i$ and $\sum_{i=0}^{a-1} c_i < \sum_{i=0}^{a-1} b_i$.
	\end{lemma}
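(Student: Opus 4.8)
The plan is to prove the statement by strong induction on $a$, using Lemma~\ref{lem05} for the base cases $a\in\{2,3,4\}$ and Lemma~\ref{lem06}, supplemented by the induction hypothesis at smaller indices, for the step $a\geq 5$. Since $\{\tilde{l}_0,\tilde{l}_1\}=\{l_0,l_1\}=\{1,2\}$ as sets, a decomposition $\tilde{l}_a+\sum_{i=0}^{a-1}c_i l_i$ can be rewritten as $\tilde{l}_a+\sum_{i=0}^{a-1}c'_i\tilde{l}_i$ with the same sum of coefficients (swap the coordinates $0$ and $1$), so it is harmless to work throughout with the $\tilde{l}_i$. Concretely, I will show: for every $a\geq 2$ and every $(b_0,\ldots,b_{a-1})\in\mathbb{N}^a$ with $\sum_{i=0}^{a-1}b_i\tilde{l}_i\geq\tilde{l}_a$, there is $(c_0,\ldots,c_{a-1})\in\mathbb{N}^a$ with $\sum_{i=0}^{a-1}b_i\tilde{l}_i-\tilde{l}_a=\sum_{i=0}^{a-1}c_i\tilde{l}_i$ and $\sum_{i=0}^{a-1}c_i<\sum_{i=0}^{a-1}b_i$. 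For $a\in\{2,3,4\}$ this is exactly Lemma~\ref{lem05}.

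So assume $a\geq 5$ and that the claim holds for all indices in $\{2,\ldots,a-1\}$, and set $x=\sum_{i=0}^{a-1}b_i\tilde{l}_i$. I split into three cases according to the pair $(b_{a-2},b_{a-1})$, which are jointly exhaustive: \textbf{(A)} $b_{a-2}\geq 1$ and $b_{a-1}\geq 1$, or $b_{a-2}=0$ and $b_{a-1}\geq 2$; \textbf{(B)} $b_{a-2}=0$ and $b_{a-1}=1$; \textbf{(C)} $b_{a-1}=0$. In case (A) the conclusion is immediate from Lemma~\ref{lem06}. In case (B), from $\tilde{l}_a=\tilde{l}_{a-1}+\tilde{l}_{a-2}$ one gets $x-\tilde{l}_a=\sum_{i=0}^{a-3}b_i\tilde{l}_i-\tilde{l}_{a-2}$, which is non-negative; hence $\sum_{i=0}^{a-3}b_i\tilde{l}_i\geq\tilde{l}_{a-2}$, so applying the induction hypothesis at index $a-2$ (legitimate since $a-2\geq 3$) and padding the resulting coefficient vector with two zeros in coordinates $a-2,a-1$ gives the required $(c_0,\ldots,c_{a-1})$, whose coefficient sum is strictly less than $\sum_{i=0}^{a-3}b_i=\sum_{i=0}^{a-1}b_i-1$.

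Case (C) requires one further split. Here $x=\sum_{i=0}^{a-2}b_i\tilde{l}_i\geq\tilde{l}_a>\tilde{l}_{a-1}$, so the induction hypothesis at index $a-1$ produces $(d_0,\ldots,d_{a-2})\in\mathbb{N}^{a-1}$ with $x-\tilde{l}_{a-1}=\sum_{i=0}^{a-2}d_i\tilde{l}_i$ and $\sum_{i=0}^{a-2}d_i<\sum_{i=0}^{a-2}b_i=\sum_{i=0}^{a-1}b_i$. Since $x-\tilde{l}_a=\sum_{i=0}^{a-2}d_i\tilde{l}_i-\tilde{l}_{a-2}\geq 0$: if $d_{a-2}\geq 1$, replace $d_{a-2}$ by $d_{a-2}-1$ and append a $0$ in coordinate $a-1$, which decreases the coefficient sum; if $d_{a-2}=0$, then $\sum_{i=0}^{a-3}d_i\tilde{l}_i\geq\tilde{l}_{a-2}$, and the induction hypothesis at index $a-2$ followed by padding with zeros in coordinates $a-2,a-1$ again yields a vector of coefficient sum strictly below $\sum_{i=0}^{a-2}d_i<\sum_{i=0}^{a-1}b_i$. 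This closes the induction.

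The routine parts are checking that (A), (B), (C) are exhaustive and that the indices $a-1$ and $a-2$ are still $\geq 2$ for $a\geq 5$; both are immediate. The only genuinely delicate point, and the one I expect to be the main (though modest) obstacle, is the bookkeeping in case (C): one must verify at each successive subtraction that the quantity stays non-negative so that the induction hypothesis is actually applicable, that the sum of coefficients strictly drops at every branch, and that the shorter coefficient vectors returned by the induction hypothesis are correctly extended to length $a$ by appending zeros.
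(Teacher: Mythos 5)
Your proof is correct and follows essentially the same strategy as the paper's: strong induction on $a$ with Lemma~\ref{lem05} as the base, Lemma~\ref{lem06} disposing of the coefficient patterns it covers, and the induction hypothesis at indices $a-1$ and $a-2$ handling the rest. The only difference is organizational --- you merge the paper's cases $b_{a-2}\geq 1,\,b_{a-1}=0$ and $b_{a-2}=b_{a-1}=0$ into a single case $b_{a-1}=0$ resolved by a sub-split on the intermediate coefficient $d_{a-2}$, which is equally valid.
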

	
	\begin{proof}
		We are going to prove the lemma using induction on $a$. 
		
		\textit{(Basis.)} By Lemma~\ref{lem05}, we have the result for $a\in\{2,3,4\}$.
		
		\textit{(Induction hypothesis.)} We now suppose that $a\geq 5$, $\sum_{i=0}^{a-1} b_i \tilde{l}_i \geq \tilde{l}_a$, and that the statement is true for all $k\in\{2,3,\ldots,a-1\}$.
		
		\textit{(Induction step.)} By Lemma~\ref{lem06}, if $a\geq 5$, then it is enough to study three cases. Let us recall that $\sum_{i=0}^{a-1} b_i \tilde{l}_i - \tilde{l}_a = \sum_{i=0}^{a-1} b_i \tilde{l}_i - \tilde{l}_{a-2} - \tilde{l}_{a-1}$.
		\begin{enumerate}
			\item If $b_{a-2}\geq 1$ and $b_{a-1}=0$, then $\sum_{i=0}^{a-1} b_i \tilde{l}_i - \tilde{l}_a = \sum_{i=0}^{a-2} b'_i \tilde{l}_i - \tilde{l}_{a-1}$, with $b'_i = b_i$ for $0\leq i\leq a-3$ and $b'_{a-2} = b_{a-2}-1$. Now, by the induction hypothesis for $k=a-1$, the result is proven in this case.
			
			\item If $b_{a-2}=0$ and $b_{a-1}=1$, then $\sum_{i=0}^{a-1} b_i \tilde{l}_i - \tilde{l}_a = \sum_{i=0}^{a-3} b_i \tilde{l}_i - \tilde{l}_{a-2}$. Then, by the induction hypothesis for $k=a-2$, the case is proven.
			
			\item If $b_{a-2}=b_{a-1}=0$, then $\sum_{i=0}^{a-1} b_i \tilde{l}_i - \tilde{l}_a = \sum_{i=2}^{a-3} b_i \tilde{l}_i - \tilde{l}_{a-2} - \tilde{l}_{a-1}$. Now, by the induction hypothesis for $k=a-2$ (observe that $\sum_{i=0}^{a-3} b_i \tilde{l}_i - \tilde{l}_{a-2} \geq \tilde{l}_{a-1} > 0$) and $k=a-1$, it follows that $\sum_{i=0}^{a-3} b_i \tilde{l}_i - \tilde{l}_{a-2} - \tilde{l}_{a-1} = \sum_{i=0}^{a-2} b'_i \tilde{l}_i - \tilde{l}_{a-1} = \sum_{i=0}^{a-1} c_i \tilde{l}_i$, with $b'_{a-2}=c_{a-1}=0$ and $\sum_{i=0}^{a-1} c_i = \sum_{i=0}^{a-2} c_i < \sum_{i=0}^{a-2} b'_i = \sum_{i=0}^{a-3} b'_i < \sum_{i=0}^{a-3} b_i= \sum_{i=0}^{a-1} b_i$. Therefore, the case is proven.
		\end{enumerate}
	\end{proof}
	
	We are now ready to prove the announced theorem. Let us recall that $l_a=\tilde{l}_a$ for all $a\in\mathbb{N}\setminus\{0,1\}$.
	
	\begin{theorem}\label{thm08}
		Let $a\in\mathbb{N}\setminus\{0,1\}$. If $x\in\{0,1,\ldots,l_a-1\}$ and $\mathrm{Ap}(S(a),l_a) = \{w(0)=0, w(1),\ldots,w(l_a-1)\}$, then $w(x) = \beta(x)l_a+x$.
	\end{theorem}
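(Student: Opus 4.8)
The plan is to convert the computation of $w(x)$ --- which by Proposition~\ref{prop1} is the least element of $S(a)$ lying in the residue class of $x$ modulo $l_a$ --- into a minimization over representations of $x$ as nonnegative sums of the $\tilde{l}_i$, and then to recognize that minimum as $\beta(x)$.

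First I would record an explicit membership criterion for $S(a)$. By Proposition~\ref{prop02} every element of $S(a)$ is a nonnegative combination $\mu l_a+\sum_{i=0}^{a-1}\lambda_i(l_a+l_i)$; collecting the coefficient of $l_a$ and using that $\{l_0,\dots,l_{a-1}\}=\{\tilde{l}_0,\dots,\tilde{l}_{a-1}\}$ (the modification only interchanges $l_0$ and $l_1$), this is equivalent to the statement that $s\in S(a)$ if and only if $s=\bigl(\mu+\sum_{i=0}^{a-1}c_i\bigr)l_a+\sum_{i=0}^{a-1}c_i\tilde{l}_i$ for some $\mu\in\mathbb{N}$ and $(c_0,\dots,c_{a-1})\in\mathbb{N}^{a}$.

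Next, fixing $x\in\{0,\dots,l_a-1\}$, I would establish the key equivalence: for $q\in\mathbb{N}$ one has $ql_a+x\in S(a)$ if and only if $x=\sum_{i=0}^{a-1}c_i\tilde{l}_i$ for some $(c_i)\in\mathbb{N}^{a}$ with $\sum_{i=0}^{a-1}c_i\le q$. One implication is the identity $ql_a+x=\bigl(q-\sum_{i=0}^{a-1}c_i\bigr)l_a+\sum_{i=0}^{a-1}c_i(l_a+\tilde{l}_i)$, where each $l_a+\tilde{l}_i$ with $0\le i\le a-1$ is a generator of $S(a)$. For the converse, write $ql_a+x$ in the form from the previous step with data $\mu,(c_i)$ and set $V=\sum_{i=0}^{a-1}c_i\tilde{l}_i$; as long as $V\ge\tilde{l}_a=l_a$, apply Lemma~\ref{lem07} to rewrite $V-l_a$ over the indices $0,\dots,a-1$ with strictly smaller coefficient sum, and iterate. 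The process terminates because the nonnegative coefficient sum strictly decreases, and it yields $V=ml_a+\sum_{i=0}^{a-1}c_i'\tilde{l}_i$ with $\sum_{i=0}^{a-1}c_i'\tilde{l}_i<l_a$ and $\sum c_i'\le\sum c_i-m$. Reducing the resulting equality $ql_a+x=(\mu+\sum c_i+m)l_a+\sum c_i'\tilde{l}_i$ modulo $l_a$ forces $\sum c_i'\tilde{l}_i=x$, and then $q=\mu+\sum c_i+m\ge\sum c_i\ge\sum c_i'$, which is the desired bound.

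Finally, since $\{\tilde{l}_i\}$ is strictly increasing and $x<l_a=\tilde{l}_a$, no representation of $x$ as a nonnegative $\tilde{l}$-combination can involve an index $\ge a$; hence $\min\{\sum_{i=0}^{a-1}c_i\mid x=\sum_{i=0}^{a-1}c_i\tilde{l}_i\}$ equals $\beta(x)$. Combined with the key equivalence, the set of $q\in\mathbb{N}$ with $ql_a+x\in S(a)$ is precisely $\{q\mid q\ge\beta(x)\}$, so the least element of $S(a)$ congruent to $x$ modulo $l_a$ is $w(x)=\beta(x)l_a+x$; the case $x=0$ is immediate from $\beta(0)=0$. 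I expect the main obstacle to be the converse direction of the key equivalence: one must run the iterated application of Lemma~\ref{lem07} carefully and keep precise track of the number $m$ of extracted copies of $l_a$ against the total decrease in coefficient sum, so that the inequality $\sum c_i'\le q$ genuinely holds; the remaining steps are routine bookkeeping.
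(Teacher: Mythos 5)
Your proof is correct and follows essentially the same route as the paper: the upper bound $w(x)\le\beta(x)l_a+x$ comes from the Zeckendorf--Lucas representation of $x$, and the lower bound rests on Lemma~\ref{lem07} applied to a generator representation of an element congruent to $x$ modulo $l_a$. The only organizational difference is that the paper applies Lemma~\ref{lem07} once and invokes the Ap\'ery property ($w(x)-l_a\notin S(a)$) to force the excess multiple $\alpha$ of $l_a$ to vanish, whereas you iterate the lemma to characterize all $q$ with $ql_a+x\in S(a)$; both are sound.
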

	
	\begin{proof}
		The result is trivial for $x=0$. So let us suppose that $x\in\{1,\ldots,l_a-1\}$.
		
		If $x=\sum_{i=0}^k b_i \tilde{l}_i$ is the Zeckendorf-Lucas decomposition of $x$, then $k<a$ and $\beta(x)l_a+x = \sum_{i=0}^{k} b_i (l_a+\tilde{l}_i) \in S(a)$. Moreover, $\beta(x)l_a+x \equiv x\pmod{l_a}$. Therefore, $w(x)\leq \beta(x)l_a+x$.
		
		We now suppose that $w(x)=\sum_{i=0}^{a-1} b'_i (l_a+\tilde{l}_i)$, with $(b'_0,\ldots,b'_{a-1})\in\mathbb{N}^{a}$. Obviously, $\sum_{i=0}^{a-1} b'_i \tilde{l}_i = x + \alpha l_a$ with $\alpha \in \mathbb{N}$. If $\alpha \geq 1$, then we can apply Lemma~\ref{lem07} and get that there exists $(c_0,\ldots,c_{a-1})\in\mathbb{N}^{a}$ such that $w(x)=\sum_{i=0}^{a-1} c_i (l_a+\tilde{l}_i) + l_a \left(1+\sum_{i=0}^{a-1} (b'_i-c_i)\right)$, with $\sum_{i=0}^{a-1} (b'_i-c_i)>0$. Therefore, $w(x)-l_a \in S(a)$, in contradiction with the fact that $w(x) \in\mathrm{Ap}(S(a),l_a)$. Thus, we have $\sum_{i=0}^{a-1} b'_i \tilde{l}_i=x$ and, in consequence, $w(x)=\left( \sum_{i=2}^{a-1} b'_i \right)l_a+x$. Finally, from the definition of $\beta(x)$, we can easily conclude that $w(x)\geq \beta(x)l_a+x$.
	\end{proof}
	
	\begin{example}\label{exmp-ap}
		By Example~\ref{exmp04}, we have $S(6)=\langle 18,19,20,21,22,25,29 \rangle$. Furthermore, from Theorem~\ref{thm08} and the corresponding Zeckendorf-Lucas decompositions, we deduce that
		\begin{itemize}
			\item $1=\tilde{l}_0;\, 2=\tilde{l}_1;\, 3=l_2;\, 4=l_3;\, 7=l_4;\, 11=l_5 \Rightarrow \beta(1)=\beta(2)=\beta(3)=\beta(4)=\beta(7)=\beta(11)=1 \Rightarrow w(1)=19;\, w(2)=20;\, w(3)=21;\, w(4)=22;\, w(7)=25;\, w(11)=29$;
			\item $5=l_3+\tilde{l}_0;\, 6=l_3+\tilde{l}_1;\, 8=l_4+\tilde{l}_0;\, 9=l_4+\tilde{l}_1;\, 10=l_4+l_2;\, 12=l_5+\tilde{l}_0;\, 13=l_5+\tilde{l}_1;\, 14=l_5+l_2;\, 15=l_5+l_3 \Rightarrow \beta(5)=\beta(6)=\beta(8)=\beta(9)=\beta(10)=\beta(12)=\beta(13)=\beta(14)=\beta(15)=2 \Rightarrow w(5)=41;\, w(6)=42;\, w(8)=44;\, w(9)=45;\, w(10)=46;\, w(12)=48;\, w(13)=49;\, w(14)=50;\, w(15)=51$;
			\item $16=l_5+l_3+\tilde{l}_0;\, 17=l_5+l_3+\tilde{l}_1 \Rightarrow \beta(16)=\beta(17)=3 \Rightarrow w(16)=70;\, w(17)=71$.
		\end{itemize}
	\end{example}

	\section{The Frobenius number of $S(a)$}\label{frobeniusS(a)}
	
	The main aim in this section is to prove Theorem~\ref{thm13}, which provides us a formula for the Frobenius number of $S(a)$ as a function of $a$ and $l_a$. For this we need some previous results.
	
	If $x\in\mathbb{N}$, then we denote by $\gamma(x) = \max\{ k\in\mathbb{N} \mid \tilde{l}_k\leq x \}$.
	
	\begin{remark}\label{rem01}
		By Lemma~\ref{lem00}, it is clear that, if $x=\sum_{i=0}^k b_i \tilde{l}_i$ is the Zeckendorf-Lucas decomposition of $x\in\mathbb{N}\setminus\{0\}$, then $\gamma(x)=k$. Moreover, if $x\geq3$, then $\gamma(x) = \max\{ k\in\mathbb{N} \mid l_k\leq x \}$.
	\end{remark}
	
	The following result is an immediate consequence of Remarks~\ref{rem00} and \ref{rem01} and the definitions of $\beta(x)$ and $\gamma(x)$.
	
	\begin{lemma}\label{lem09}
		If $x\in\mathbb{N}\setminus\{0\}$, then $\beta(x) = \beta\left(x-\tilde{l}_{\gamma(x)}\right) + 1$.
	\end{lemma}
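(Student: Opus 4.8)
The plan is to read off both sides of the identity directly from the Zeckendorf--Lucas decomposition of $x$, combining Remarks~\ref{rem00} and \ref{rem01} with the uniqueness part of Lemma~\ref{lem00}.

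First I would write $x=\sum_{i=0}^{k} b_i \tilde{l}_i$ for the Zeckendorf--Lucas decomposition of $x$, so that $(b_0,\ldots,b_k)\in\{0,1\}^{k+1}$ with $b_k=1$, $b_0b_2=0$, and $b_ib_{i+1}=0$ for all $i\in\{0,\ldots,k-1\}$. By Remark~\ref{rem01} we have $\gamma(x)=k$, hence $\tilde{l}_{\gamma(x)}=\tilde{l}_k\leq x$ and
\[ x-\tilde{l}_{\gamma(x)} = \sum_{i=0}^{k-1} b_i \tilde{l}_i. \]
By Remark~\ref{rem00}, $\beta(x)=\sum_{i=0}^{k} b_i = 1+\sum_{i=0}^{k-1} b_i$, so it remains to show that $\beta\big(x-\tilde{l}_{\gamma(x)}\big)=\sum_{i=0}^{k-1} b_i$.

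The key step is to observe that $\sum_{i=0}^{k-1} b_i \tilde{l}_i$ is, up to discarding vanishing leading coefficients, precisely the Zeckendorf--Lucas decomposition of $x-\tilde{l}_{\gamma(x)}$. Indeed, a truncation of a tuple of coefficients satisfying the non-consecutiveness conditions of Lemma~\ref{lem00} still satisfies those conditions, so by the uniqueness assertion of Lemma~\ref{lem00} this truncated expression is the Zeckendorf--Lucas decomposition of $x-\tilde{l}_{\gamma(x)}$; Remark~\ref{rem00} then gives $\beta\big(x-\tilde{l}_{\gamma(x)}\big)=\sum_{i=0}^{k-1} b_i$. Putting this together with the previous paragraph yields $\beta(x)=\beta\big(x-\tilde{l}_{\gamma(x)}\big)+1$.

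Finally I would handle the degenerate case separately: if $x=\tilde{l}_k$ (equivalently $x-\tilde{l}_{\gamma(x)}=0$), then all lower coefficients vanish, $\beta(x)=1$, and $\beta(0)=0$ by Remark~\ref{rem00}, so the identity still holds. There is essentially no genuine obstacle here; the only point requiring a line of care is the appeal to uniqueness in Lemma~\ref{lem00} to identify the decomposition of the remainder, together with the trivial bookkeeping when that remainder equals $0$.
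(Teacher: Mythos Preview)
Your argument is correct and is precisely the intended one: the paper states that the lemma is an immediate consequence of Remarks~\ref{rem00} and \ref{rem01} together with the definitions of $\beta$ and $\gamma$, and your proof simply unpacks this by identifying $\gamma(x)$ with the top index of the Zeckendorf--Lucas decomposition and reading off $\beta$ on both sides. The only detail you supply beyond the paper is the (routine) observation that the truncated expansion remains a valid Zeckendorf--Lucas decomposition, together with the degenerate case $x=\tilde{l}_{\gamma(x)}$.
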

	
	Since a Zeckendorf-Lucas decomposition does not admit consecutive Lucas numbers as addends, we easily have the following result.
	
	\begin{lemma}\label{lem09b}
		If $x\in\mathbb{N}\setminus\left\{ \{0\} \cup \{l_a\mid a\in\mathbb{N}\} \right\}$, then $\gamma\left(x-\tilde{l}_{\gamma(x)}\right) \leq \gamma(x) - 2$.
	\end{lemma}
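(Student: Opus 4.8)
The plan is to obtain the statement directly from the shape of the Zeckendorf--Lucas decomposition, using exactly the feature that it contains no two consecutive $\tilde{l}_i$'s. First I would write $x=\sum_{i=0}^{k} b_i\tilde{l}_i$ for the Zeckendorf--Lucas decomposition of $x$. By Remark~\ref{rem01} we have $k=\gamma(x)$, and by Lemma~\ref{lem00} the coefficients satisfy $(b_0,\dots,b_k)\in\{0,1\}^{k+1}$, $b_k=1$, $b_0b_2=0$, and $b_ib_{i+1}=0$ for $i\in\{0,\dots,k-1\}$. In particular $b_{k-1}b_k=0$ together with $b_k=1$ forces $b_{k-1}=0$, so the leading summand $\tilde{l}_{\gamma(x)}=\tilde{l}_k$ is isolated in the decomposition.

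Next I would check that deleting this leading summand again produces a Zeckendorf--Lucas decomposition. Indeed $x-\tilde{l}_{\gamma(x)}=\sum_{i=0}^{k-2}b_i\tilde{l}_i$, and every condition of Lemma~\ref{lem00} ($\{0,1\}$-valued entries, $b_0b_2=0$, no two consecutive nonzero entries) is inherited by this truncation; what remains to verify is only that the new string of coefficients is not identically zero and that its leading entry equals $1$. The first point is where the hypothesis enters: since $\tilde{l}_0=l_1$, $\tilde{l}_1=l_0$ and $\tilde{l}_k=l_k$ for $k\geq 2$, every $\tilde{l}_k$ is a Lucas number, so $x\notin\{0\}\cup\{l_a\mid a\in\mathbb{N}\}$ gives $x\neq\tilde{l}_k$, hence $x-\tilde{l}_k>0$; the second point is automatic because the leading nonzero entry of a $\{0,1\}$-string is $1$. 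Thus, letting $j$ be the largest index with $b_j=1$, the previous paragraph gives $j\leq k-2$, and $x-\tilde{l}_k=\tilde{l}_j+\sum_{i=0}^{j-2}b_i\tilde{l}_i$ is the Zeckendorf--Lucas decomposition of $x-\tilde{l}_k$.

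Applying Remark~\ref{rem01} to the positive integer $x-\tilde{l}_{\gamma(x)}$ then yields $\gamma(x-\tilde{l}_{\gamma(x)})=j\leq k-2=\gamma(x)-2$, which is the claim. (The hypothesis in fact forces $\gamma(x)\geq 3$, so the right-hand side is a non-negative integer.) I do not expect any genuine obstacle: the whole content is the ``no two consecutive addends'' property of the Zeckendorf--Lucas decomposition recalled just before the statement, and the only delicate points are the trivial bookkeeping ensuring $x-\tilde{l}_k\neq 0$ — which is exactly why $0$ and the Lucas numbers are excluded, these being precisely the integers whose Zeckendorf--Lucas decomposition consists of the single term $\tilde{l}_{\gamma(x)}$ — together with the appeal to Remark~\ref{rem01} to identify $\gamma$ with the top index of the decomposition.
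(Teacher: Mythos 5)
Your proposal is correct and is essentially the paper's own argument: the paper dispatches this lemma with the single observation that the Zeckendorf--Lucas decomposition admits no consecutive addends, which is exactly the fact ($b_{\gamma(x)}=1$ forces $b_{\gamma(x)-1}=0$) that you spell out, together with the bookkeeping that excluding $0$ and the Lucas numbers guarantees $x-\tilde{l}_{\gamma(x)}>0$ so that Remark~\ref{rem01} applies to it.
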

	
	In some cases, we can very easily give $\beta(x)$. For example, $\beta(\tilde{l}_a)=1$ for all $a\in\mathbb{N}$. Let us see another case. As usual, $\lceil x \rceil = \min\{z\in\mathbb{Z} \mid x\leq z \}$.
	 
	\begin{lemma}\label{lem10}
		If $a\in\mathbb{N}\setminus\{1\}$, then $\beta(\tilde{l}_a-1) = \left\lceil \frac{a-1}{2} \right\rceil$.
	\end{lemma}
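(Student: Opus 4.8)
The plan is to induct on $a$, using Lemma~\ref{lem09} to peel off the largest Lucas summand. Since the modified sequence $\{\tilde{l}_n\}_{n\in\mathbb{N}} = 1,2,3,4,7,11,18,\dots$ is strictly increasing, for $a\geq 2$ we have $\tilde{l}_{a-1}\leq \tilde{l}_a-1<\tilde{l}_a$, hence $\gamma(\tilde{l}_a-1)=a-1$. Since $\tilde{l}_a-1\in\mathbb{N}\setminus\{0\}$ for $a\geq 2$, Lemma~\ref{lem09} gives $\beta(\tilde{l}_a-1)=\beta\!\left(\tilde{l}_a-1-\tilde{l}_{a-1}\right)+1$.

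Next I would simplify $\tilde{l}_a-1-\tilde{l}_{a-1}$. For $a\geq 4$ all of $a,a-1,a-2$ are at least $2$, so $\tilde{l}_a=l_a=l_{a-1}+l_{a-2}=\tilde{l}_{a-1}+\tilde{l}_{a-2}$, whence $\tilde{l}_a-1-\tilde{l}_{a-1}=\tilde{l}_{a-2}-1$ and therefore $\beta(\tilde{l}_a-1)=\beta(\tilde{l}_{a-2}-1)+1$. For the two small cases one checks directly that $\tilde{l}_2-1-\tilde{l}_1=3-1-2=0$ and $\tilde{l}_3-1-\tilde{l}_2=4-1-3=0$, so $\beta(\tilde{l}_2-1)=\beta(\tilde{l}_3-1)=\beta(0)+1=1$; also $\beta(\tilde{l}_0-1)=\beta(0)=0$.

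Finally I would run the induction: the base cases $a=0$ (value $0=\lceil -1/2\rceil$) and $a\in\{2,3\}$ (value $1=\lceil 1/2\rceil=\lceil 2/2\rceil$) are settled above, and for $a\geq 4$ the induction hypothesis $\beta(\tilde{l}_{a-2}-1)=\lceil (a-3)/2\rceil$ yields $\beta(\tilde{l}_a-1)=\lceil (a-3)/2\rceil+1=\lceil (a-1)/2\rceil$, completing the argument.

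The only delicate point is the small-index bookkeeping for $\{\tilde{l}_n\}$: the Lucas recurrence $\tilde{l}_n=\tilde{l}_{n-1}+\tilde{l}_{n-2}$ holds for $n=2$ and for all $n\geq 4$ but \emph{fails} for $n=3$ (because $\tilde{l}_1=l_0=2\neq l_1$), which forces $a=2$ and $a=3$ to be treated as separate base cases rather than absorbed into the recursion step. An alternative route would be to exhibit the Zeckendorf--Lucas decomposition of $\tilde{l}_a-1$ explicitly, namely $\tilde{l}_{a-1}+\tilde{l}_{a-3}+\cdots$ down to $\tilde{l}_1$ if $a$ is even and to $\tilde{l}_2$ if $a$ is odd, check it satisfies the conditions of Lemma~\ref{lem00} and sums to $\tilde{l}_a-1$ via the identities $\sum_{k=1}^{n} l_{2k-1}=l_{2n}-2$ and $\sum_{k=0}^{n} l_{2k}=l_{2n+1}+1$, and conclude by Remark~\ref{rem00}; but this requires importing those two Lucas identities, so I expect the inductive argument via Lemma~\ref{lem09} to be the cleaner presentation.
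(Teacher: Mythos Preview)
Your proposal is correct and follows essentially the same approach as the paper: induction on $a$ with base cases $a\in\{0,2,3\}$, and for $a\geq 4$ an appeal to Lemma~\ref{lem09} together with the recurrence $\tilde{l}_a=\tilde{l}_{a-1}+\tilde{l}_{a-2}$ to obtain $\beta(\tilde{l}_a-1)=\beta(\tilde{l}_{a-2}-1)+1$. You are in fact more explicit than the paper in two respects: you justify $\gamma(\tilde{l}_a-1)=a-1$ before invoking Lemma~\ref{lem09}, and you correctly isolate the failure of the recurrence at $n=3$ as the reason $a=3$ must be a separate base case; the paper leaves both points implicit.
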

	
	\begin{proof}
		We will argue by mathematical induction on $a$. First, it is trivial that the result is true for $a\in\{0,2,3\}$.
		
		Now, by Lemma~\ref{lem09}, if $a\geq 4$, then $\beta(\tilde{l}_a - 1) = \beta(\tilde{l}_a - 1 - \tilde{l}_{a-1})+ 1 = \beta(\tilde{l}_{a-2} - 1) + 1$. Therefore, by the induction hypothesis on $a-2$, we have that $\beta(\tilde{l}_{a-2} - 1) = \left\lceil \frac{a-3}{2} \right\rceil$ and, consequently, $\beta(\tilde{l}_a - 1) = \left\lceil \frac{a-3}{2} \right\rceil + 1 = \left\lceil \frac{a-1}{2} \right\rceil$.
	\end{proof}
	
	In the general case, we can show an upper bound.
	
	\begin{lemma}\label{lem11}
		If $x\in\mathbb{N}\setminus\{l_a\mid a\in\mathbb{N}\}$, then $\beta(x) \leq \left\lceil \frac{\gamma(x)}{2} \right\rceil$.
	\end{lemma}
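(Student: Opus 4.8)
The plan is to establish the bound by strong induction on $m=\gamma(x)$, peeling off the leading term of the Zeckendorf--Lucas decomposition with Lemma~\ref{lem09} and controlling the resulting drop in $\gamma$ with Lemma~\ref{lem09b}. Before starting the induction I would record the elementary observation that every admissible $x$, that is, every $x\in\mathbb{N}\setminus\{l_a\mid a\in\mathbb{N}\}$ with $x\geq 1$, satisfies $\gamma(x)\geq 3$: the integers $1=l_1$, $2=l_0$, $3=l_2$ and $4=l_3$ are all Lucas numbers, so an admissible $x$ is at least $5$, and $\gamma(5)=3$. In particular $m\geq 3$ throughout, hence $\lceil m/2\rceil\geq 2$.

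For the base case $m=3$ only $x\in\{5,6\}$ occur, and from $5=l_3+\tilde{l}_0$ and $6=l_3+\tilde{l}_1$ one reads off $\beta(5)=\beta(6)=2=\lceil 3/2\rceil$.

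For the inductive step I would assume $m=\gamma(x)\geq 4$ and that the inequality holds for every admissible integer with a strictly smaller value of $\gamma$. Set $x'=x-\tilde{l}_{\gamma(x)}$, so that $\beta(x)=\beta(x')+1$ by Lemma~\ref{lem09}, and note that Lemma~\ref{lem09b}, applicable since $x$ is admissible, gives $\gamma(x')\leq m-2$. Then split according to the nature of $x'$. If $x'=0$ then $x=\tilde{l}_m=l_m$ is a Lucas number, contradicting admissibility, so this case does not occur. If $x'$ is a nonzero Lucas number, then $x'=\tilde{l}_j$ for some $j$, whence $\beta(x')=1$ and $\beta(x)=2\leq\lceil m/2\rceil$ because $m\geq 4$. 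In the remaining case $x'$ is itself admissible, so the induction hypothesis applies and, together with $\gamma(x')\leq m-2$ and monotonicity of the ceiling, yields $\beta(x)=\beta(x')+1\leq\lceil\gamma(x')/2\rceil+1\leq\lceil(m-2)/2\rceil+1=\lceil m/2\rceil$.

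I do not anticipate a serious obstacle, since the whole argument is essentially a telescoping of Lemmas~\ref{lem09} and~\ref{lem09b}. The only delicate points are the two degenerate subcases of the inductive step: the exclusion of Lucas numbers in the hypothesis is precisely what rules out $x'=0$, and the subcase where $x'$ is a nonzero Lucas number is what forces the induction to be anchored at $m=3$ rather than lower, since there one only obtains $\beta(x)=2$ and therefore needs $\lceil m/2\rceil\geq 2$.
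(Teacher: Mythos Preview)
Your argument is correct and follows the same skeleton as the paper's proof: peel off the leading term via Lemma~\ref{lem09}, bound the drop in $\gamma$ via Lemma~\ref{lem09b}, and induct. The paper inducts on $x$ rather than on $m=\gamma(x)$, but that difference is cosmetic.

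There is one substantive point where your treatment is sharper. In the inductive step the paper asserts that $x'=x-\tilde{l}_{\gamma(x)}$ is never a Lucas number, on the grounds that this would force $x$ itself to be one, and then applies the induction hypothesis directly to $x'$. That assertion is not true as stated (for instance $x=14$ gives $x'=14-l_5=3=l_2$, yet $14$ is not a Lucas number), so the paper's chain of inequalities has a small gap at exactly that spot. Your explicit case split---handling $x'$ a nonzero Lucas number separately by observing $\beta(x')=1$ and hence $\beta(x)=2\le\lceil m/2\rceil$---is the right fix, and in fact repairs the paper's argument as well.
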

	
	\begin{proof}
		We will use mathematical induction on $x$. First, by simple calculations, the result is true for $x\in\{5,6\}$.
		
		Now, let us suppose that $x\geq 8$ and that $\beta(y) \leq \left\lceil \frac{\gamma(y)}{2} \right\rceil$ for all $y<x$ such that $y\not\in \{l_a\mid a\in\mathbb{N}\}$. Note that $x-\tilde{l}_{\gamma(x)}\not\in \{l_a\mid a\in\mathbb{N}\}$ because otherwise $x \in \{l_a\mid a\in\mathbb{N}\}$, contradicting the assumption of the lemma. Then, by Lemmas~\ref{lem09} and \ref{lem09b}, we have that
		\[ \beta(x) = \beta\left(x-\tilde{l}_{\gamma(x)}\right) + 1 \leq  \left\lceil \frac{\gamma\left(x-\tilde{l}_{\gamma(x)}\right)}{2} \right\rceil + 1 \leq \left\lceil \frac{\gamma(x)-2}{2} \right\rceil + 1 = \left\lceil \frac{\gamma(x)}{2} \right\rceil. \]
	\end{proof}
	
%

	We are ready to show the announced theorem.
	
	\begin{theorem}\label{thm13}
		If $a\in\mathbb{N}\setminus\{0,1\}$, then $\mathrm{F}(S(a)) =\left\lceil \frac{a-1}{2} \right\rceil l_a -1$.
	\end{theorem}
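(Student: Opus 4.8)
The plan is to reduce everything to $\mathrm{Ap}(S(a),l_a)$ via Proposition~\ref{prop2}(1), which says $\mathrm{F}(S(a))=\max(\mathrm{Ap}(S(a),l_a))-l_a$. By Theorem~\ref{thm08} we have $\mathrm{Ap}(S(a),l_a)=\{\beta(x)l_a+x \mid 0\leq x\leq l_a-1\}$, so the task becomes to show that $\max_{0\leq x\leq l_a-1}\big(\beta(x)l_a+x\big)=\left\lceil\frac{a-1}{2}\right\rceil l_a+(l_a-1)$, after which subtracting $l_a$ yields the claimed formula.

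For the lower bound I would simply evaluate at $x=l_a-1=\tilde{l}_a-1$. Lemma~\ref{lem10} gives $\beta(\tilde{l}_a-1)=\left\lceil\frac{a-1}{2}\right\rceil$, so $w(l_a-1)=\left\lceil\frac{a-1}{2}\right\rceil l_a+(l_a-1)$ is an element of $\mathrm{Ap}(S(a),l_a)$, hence $\mathrm{F}(S(a))\geq\left\lceil\frac{a-1}{2}\right\rceil l_a-1$.

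For the upper bound I would prove that $\beta(x)\leq\left\lceil\frac{a-1}{2}\right\rceil$ for every $x\in\{1,\ldots,l_a-1\}$, splitting into two cases. If $x$ is a Lucas number, then $\beta(x)=1$, and $1\leq\left\lceil\frac{a-1}{2}\right\rceil$ holds because $a\geq2$. If $x$ is not a Lucas number, then since $x\leq l_a-1<l_a=\tilde{l}_a$ and the sequence $\{\tilde{l}_n\}$ is monotone, we get $\gamma(x)\leq a-1$, and Lemma~\ref{lem11} gives $\beta(x)\leq\left\lceil\frac{\gamma(x)}{2}\right\rceil\leq\left\lceil\frac{a-1}{2}\right\rceil$. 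In either case $\beta(x)l_a+x\leq\left\lceil\frac{a-1}{2}\right\rceil l_a+(l_a-1)$. Combining the two bounds gives the exact value of $\max(\mathrm{Ap}(S(a),l_a))$, and Proposition~\ref{prop2}(1) finishes the proof.

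The argument is mostly bookkeeping on top of Lemmas~\ref{lem10} and \ref{lem11}; the only delicate point is the case split in the upper bound, since Lemma~\ref{lem11} explicitly excludes Lucas numbers—but for those $\beta$ is identically $1$, so nothing is lost. It is also worth a quick sanity check of the small values $a\in\{2,3,4\}$, where $\left\lceil\frac{a-1}{2}\right\rceil\in\{1,2\}$, to make sure the edge behaviour of the Ap\'ery set (e.g.\ when $l_a-1$ happens to itself be a Lucas number, as for $a=3$) is consistent with the formula.
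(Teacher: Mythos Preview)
Your proposal is correct and follows essentially the same route as the paper: reduce to $\max(\mathrm{Ap}(S(a),l_a))$ via Proposition~\ref{prop2}, describe the Ap\'ery set through Theorem~\ref{thm08}, and then use Lemma~\ref{lem10} for the lower bound and Lemma~\ref{lem11} (together with $\gamma(x)\le a-1$) for the upper bound. The paper compresses all of this into one sentence, while you spell out the case split needed because Lemma~\ref{lem11} excludes Lucas numbers; that extra care is warranted and not a departure from the paper's argument.
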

	
	\begin{proof}
		If $x\in\{0,1,\ldots,l_a-1\}$, then $\gamma(x)\leq a-1$. Therefore, by applying Theorem~\ref{thm08} and Lemmas~\ref{lem10} and \ref{lem11}, we easily deduce that $\max\left( \mathrm{Ap}(S(a),l_a)\right) = \left\lceil \frac{a-1}{2} \right\rceil l_a + l_a -1$. Finally, by applying Proposition~\ref{prop2}, we get that $\mathrm{F}(S(a)) = \left\lceil \frac{a-1}{2} \right\rceil l_a - 1$.
	\end{proof}
	
	\begin{example}\label{exmp14}
		By Example~\ref{exmp04}, we have that $S(6)=\langle 18,19,20,21,22,25,29 \rangle$. From Theorem~\ref{thm13}, we get that $\mathrm{F}(S(6)) = \left\lceil \frac{6-1}{2} \right\rceil l_6 - 1 = 53$.
	\end{example}
	
	Since $\mathrm{e}(S(a))=a+1$ and $\mathrm{m}(S(a))=l_a$, we can reformulate Theorem~\ref{thm13} as follows.
	
	\begin{corollary}\label{cor15}
		If $a\in\mathbb{N}\setminus\{0,1\}$, then $\mathrm{F}(S(a)) =\left\lceil \frac{\mathrm{e}(S(a))-2}{2} \right\rceil \mathrm{m}(S(a)) -1$.
	\end{corollary}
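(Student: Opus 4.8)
The plan is to observe that this corollary is a purely cosmetic restatement of Theorem~\ref{thm13}, so the proof amounts to identifying the quantities $a$ and $l_a$ appearing there with semigroup invariants of $S(a)$. First I would recall that, by Corollary~\ref{cor03}, the embedding dimension is $\mathrm{e}(S(a)) = a+1$, so that $a - 1 = \mathrm{e}(S(a)) - 2$ and hence $\left\lceil \frac{a-1}{2} \right\rceil = \left\lceil \frac{\mathrm{e}(S(a))-2}{2} \right\rceil$.

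Next I would pin down the multiplicity. The multiplicity $\mathrm{m}(S(a))$ is by definition the least positive element of $S(a)$, and from the chain of inequalities established in the proof of Proposition~\ref{prop02}, namely $l_a < l_a + l_1 < l_a + l_0 < l_a + l_2 < \cdots < l_a + l_{a-1} < 2l_a$, together with the fact that $\mathrm{msg}(S(a)) = \{l_a, l_a+l_0, \ldots, l_a+l_{a-1}\}$, the smallest generator — and therefore the smallest nonzero element of $S(a)$ — is $l_a$. Thus $\mathrm{m}(S(a)) = l_a$.

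Substituting these two identities into the formula $\mathrm{F}(S(a)) = \left\lceil \frac{a-1}{2} \right\rceil l_a - 1$ of Theorem~\ref{thm13} immediately yields $\mathrm{F}(S(a)) = \left\lceil \frac{\mathrm{e}(S(a))-2}{2} \right\rceil \mathrm{m}(S(a)) - 1$, as claimed. There is no genuine obstacle here; the only point requiring a word of justification is the identification $\mathrm{m}(S(a)) = l_a$, which however follows at once from the ordering of generators already exhibited in Section~\ref{msgS(a)}. (One should, for completeness, also note the hypothesis $a \in \mathbb{N}\setminus\{0,1\}$ is exactly the one under which Corollary~\ref{cor03} and Theorem~\ref{thm13} were stated, so nothing new needs to be checked.)
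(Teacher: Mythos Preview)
Your proposal is correct and matches the paper's own justification exactly: immediately before the corollary the paper simply remarks ``Since $\mathrm{e}(S(a))=a+1$ and $\mathrm{m}(S(a))=l_a$, we can reformulate Theorem~\ref{thm13} as follows,'' and states the result without further proof. You have merely (and correctly) supplied the one-line justifications for those two identifications.
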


	\section{The genus of $S(a)$}\label{genusS(a)}
	
	In this section we will give a formula for the genus of $S(a)$. As usual, if $A$ is a set, then we denote by $\#(A)$ the cardinality of $A$. Moreover, if $m,n\in\mathbb{N}$ with $m\leq n$, then we denote by $\mathcal{F}_n(m)$ the set
	\[ \{ X\!\subseteq\! \{0,\ldots,n-1\} \mid  \#(X)=m \mbox{ and no two consecutive integers belong to } X \}. \]
	Moreover, for technical reasons, we take $\mathcal{F}_0(0)=\{\emptyset\}$.
	
	It is clear that $\#(\mathcal{F}_n(m))=0$ for all $m > \frac{n+1}{2}$. In other case, we have a classical result on counting subsets. 
	\begin{lemma}{\cite[Lemma~1]{kaplansky}}\label{lem16}
		If $m\in\mathbb{N}$, $n\in\mathbb{N}\setminus\{0\}$, and $m\leq \frac{n+1}{2}$, then $\#(\mathcal{F}_n(m)) = \binom{n+1-m}{m}$.
	\end{lemma}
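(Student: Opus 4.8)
The plan is to prove this by exhibiting an explicit bijection between $\mathcal{F}_n(m)$ and the collection of \emph{all} $m$-element subsets of a set with $n+1-m$ elements, so that the count $\binom{n+1-m}{m}$ drops out directly from the definition of the binomial coefficient. (An induction on $n$ via Pascal's rule also works, but the bijective argument is cleaner.)

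Concretely, given $X\in\mathcal{F}_n(m)$, list its elements in increasing order as $x_1<x_2<\cdots<x_m$ and set $\varphi(X)=\{x_1,\ x_2-1,\ x_3-2,\ \ldots,\ x_m-(m-1)\}$. First I would check that $\varphi$ is well defined with image among the $m$-subsets of $\{0,1,\ldots,n-m\}$: since $X$ contains no two consecutive integers we have $x_{i+1}\geq x_i+2$, hence $x_{i+1}-i\geq (x_i-(i-1))+1$, so the values $x_i-(i-1)$ are strictly increasing (in particular pairwise distinct, so $\#\varphi(X)=m$); and from $x_1\geq 0$ and $x_m\leq n-1$ we get $x_1\geq 0$ and $x_m-(m-1)\leq n-m$. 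Next I would write down the inverse: for $Y=\{y_1<\cdots<y_m\}\subseteq\{0,\ldots,n-m\}$ put $\psi(Y)=\{y_1,\ y_2+1,\ \ldots,\ y_m+(m-1)\}$; the same inequality chain shows $\psi(Y)$ has $m$ elements, lies in $\{0,\ldots,n-1\}$, and has no two consecutive entries, so $\psi(Y)\in\mathcal{F}_n(m)$, and clearly $\varphi$ and $\psi$ are mutually inverse. Since $\{0,\ldots,n-m\}$ has $n+1-m$ elements, it follows that $\#\mathcal{F}_n(m)=\binom{n+1-m}{m}$.

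It remains only to dispose of the degenerate cases. When $m=0$ both sides equal $1$, consistent with the conventions $\mathcal{F}_0(0)=\{\emptyset\}$ and $\binom{n+1}{0}=1$. When $m\geq 1$ the hypothesis $m\leq\frac{n+1}{2}$ is exactly what guarantees $n+1-m\geq m$, so the formula does not collapse for a spurious reason; in fact the bijection is valid verbatim even without this hypothesis, correctly yielding $\binom{n+1-m}{m}=0$ once $m>n+1-m$.

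The truthful answer about the ``main obstacle'' is that there is essentially none: this is Kaplansky's classical lemma, cited from \cite{kaplansky}, and the only thing requiring care is the endpoint bookkeeping in the bijection (verifying strict monotonicity of the shifted entries and the bounds $0\leq x_1$, $x_m-(m-1)\leq n-m$) together with the two degenerate checks above. I would present the bijection with these verifications and conclude.
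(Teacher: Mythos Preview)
Your bijective argument is correct and is the standard proof of this classical combinatorial fact. The paper, however, does not supply its own proof at all: the lemma is stated with the citation \cite[Lemma~1]{kaplansky} and used as a black box. So there is nothing to compare against beyond noting that you have reproduced Kaplansky's classical argument, whereas the authors simply invoke it.
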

	
	\begin{remark}\label{rem17}
	For $a\geq4$, the Zeckendorf-Lucas decomposition gives us a bijection between the sets $\{1,\ldots,l_a-1\}$ (recall that $l_a=\tilde{l}_a$ for all $a\in\mathbb{N}\setminus\{0,1\}$) and 
	\[ \mathcal{L}(a) = \left( \mathcal{F}_a(1) \cup \cdots \cup \mathcal{F}_a\left( \left\lfloor \frac{a+1}{2} \right\rfloor \right) \right) \setminus \left( \mathcal{F}_{a-4}(0) \cup \cdots \cup \mathcal{F}_{a-4}\left( \left\lfloor \frac{a-3}{2} \right\rfloor \right) \right). \]
	Indeed, if $x\in\{1,\ldots,l_a-1\}$ has the Zeckendorf-Lucas decomposition $\sum_{i=0}^k b_i \tilde{l}_i$ ($k<a$, $(b_0,\ldots,b_k)\in\{0,1\}^{k+1}$, $b_k=1$, $b_0b_2=0$, and $b_ib_{i+1}=0$ for all $i\in\{0,\ldots,k-1\}$), then we can associate $x$ with the set $B(x) \in \mathcal{F}_a(\beta(x)) \setminus \mathcal{F}_{a-4}(\beta(x)-2)$ consisting of all subscripts $j$ such that $b_j=1$ (let us observe that, with $\mathcal{F}_{a-4}(\beta(x)-2)$, we eliminate the sets in which $b_0=1, b_1=0, b_2=1, b_3=0$). Now, from the well known equality $f_a=\sum_{j=0}^{\left\lfloor \frac{a-1}{2} \right\rfloor} \binom{a-1-j}{j}$, the relation $l_a = f_{a+2}-f_{a-2}$, and the uniqueness of the Zeckendorf-Lucas decomposition, the correspondence associating $x$ to $B(x)$ is the sought bijection.	
	\end{remark}
	
	As a consequence of Theorem~\ref{thm08}, Lemma~\ref{lem16}, and Remark~\ref{rem17}, we have the following result.
	
	\begin{proposition}\label{prop18}
		If $a\in\mathbb{N}\setminus\{0,1,2,3\}$, then
		\[ \mathrm{Ap}\left(S(a),l_a\right)\setminus\{0\} = \left\{ \left(\#(B)\right)l_a + \sum_{b\in B} \tilde{l}_b \mid B \in \mathcal{L}(a)\setminus\{\emptyset\} \right\}. \]
		Moreover, if $\{B_1,B_2\} \subseteq \mathcal{L}(a)\setminus\{\emptyset\}$, then $\left(\#(B_1)\right)l_a + \sum_{b\in B_1} \tilde{l}_b = \left(\#(B_2)\right)l_a + \sum_{b\in B_2} \tilde{l}_b$ if and only if $B_1=B_2$.
	\end{proposition}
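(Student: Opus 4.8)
The plan is to derive Proposition~\ref{prop18} essentially as a translation of Theorem~\ref{thm08} through the bijection of Remark~\ref{rem17}. First I would recall that, by Theorem~\ref{thm08}, $\mathrm{Ap}(S(a),l_a)\setminus\{0\} = \{\beta(x)l_a + x \mid x\in\{1,\ldots,l_a-1\}\}$, and that these values are pairwise distinct since they lie in distinct residue classes modulo $l_a$ (each one is $\equiv x \pmod{l_a}$). So the content of the proposition is to rewrite $\beta(x)l_a + x$ in terms of the Zeckendorf--Lucas data of $x$.

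Next I would invoke Remark~\ref{rem17}, which gives the bijection $x \mapsto B(x)$ between $\{1,\ldots,l_a-1\}$ and $\mathcal{L}(a)\setminus\{\emptyset\}$ (for $a\geq 4$; the excluded cases $a\in\{0,1,2,3\}$ are ruled out by hypothesis), where $B(x)$ is the set of indices $j$ with $b_j=1$ in the Zeckendorf--Lucas decomposition $x=\sum_{i=0}^k b_i\tilde l_i$. For such an $x$ we have, directly from Remark~\ref{rem00}, that $\beta(x)=\sum_{i=0}^k b_i = \#(B(x))$, and of course $x=\sum_{i=0}^k b_i\tilde l_i = \sum_{b\in B(x)}\tilde l_b$. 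Substituting into $\beta(x)l_a+x$ gives $\#(B(x))\,l_a + \sum_{b\in B(x)}\tilde l_b$, and as $x$ ranges over $\{1,\ldots,l_a-1\}$ the set $B=B(x)$ ranges exactly over $\mathcal{L}(a)\setminus\{\emptyset\}$; this proves the displayed set equality.

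For the second assertion, one direction is trivial. For the other, suppose $\#(B_1)l_a + \sum_{b\in B_1}\tilde l_b = \#(B_2)l_a + \sum_{b\in B_2}\tilde l_b$ with $B_1,B_2\in\mathcal{L}(a)\setminus\{\emptyset\}$. Each $B_i$ is, by construction, the index set of a Zeckendorf--Lucas decomposition, so $x_i := \sum_{b\in B_i}\tilde l_b$ satisfies $1\le x_i \le l_a-1$ (the upper bound because $B_i\subseteq\{0,\ldots,a-1\}$ with the non-consecutivity and $b_0b_2=0$ constraints force $x_i<\tilde l_a=l_a$, as in Remark~\ref{rem17}) and $\beta(x_i)=\#(B_i)$. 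Hence the common value is $\beta(x_1)l_a+x_1=\beta(x_2)l_a+x_2$; reducing modulo $l_a$ and using $0\le x_i<l_a$ forces $x_1=x_2$, and then uniqueness of the Zeckendorf--Lucas decomposition (Lemma~\ref{lem00}) gives $B_1=B_2$.

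The only genuinely delicate point is making sure the bijection of Remark~\ref{rem17} is being applied on exactly the right index range and that every $B\in\mathcal{L}(a)$ really is the support of a legitimate Zeckendorf--Lucas decomposition of some $x\le l_a-1$ — i.e.\ that the set-difference with $\mathcal{F}_{a-4}(\cdot)$ precisely removes the configurations violating $b_0b_2=0$ and nothing else, and that the count matches $l_a-1$ via $l_a=f_{a+2}-f_{a-2}$. Since this is precisely what Remark~\ref{rem17} already establishes, I would simply cite it; the remainder is the routine substitution and a congruence argument as above.
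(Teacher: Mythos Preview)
Your proposal is correct and follows essentially the same approach as the paper, which simply states that the proposition is a consequence of Theorem~\ref{thm08}, Lemma~\ref{lem16}, and Remark~\ref{rem17} without spelling out the details. Your write-up makes explicit the substitution $\beta(x)=\#(B(x))$, $x=\sum_{b\in B(x)}\tilde l_b$ via the bijection of Remark~\ref{rem17}, and your injectivity argument (reduce modulo $l_a$, then invoke uniqueness of the Zeckendorf--Lucas decomposition from Lemma~\ref{lem00}) is exactly the intended one.
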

	
	The next result is an easy consequence of Proposition~\ref{prop2}.
	
	\begin{lemma}\label{lem19}
		If $S$ is a numerical semigroup, $n\in S\setminus\{0\}$, $\{k_1,k_2,\ldots,k_{n-1}\}\subseteq\mathbb{N}$, and $\mathrm{Ap}(S,n)=\{0,k_1n+1,k_2n+2,\ldots,k_{n-1}n+(n-1)\}$, then $\mathrm{g}(S)=k_1+k_2+\cdots+k_{n-1}$.
	\end{lemma}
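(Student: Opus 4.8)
The plan is to derive this directly from part (2) of Proposition~\ref{prop2}, so essentially no new idea is needed beyond a bookkeeping computation. First I would observe that the hypothesis already presents $\mathrm{Ap}(S,n)$ in the canonical form of Proposition~\ref{prop1}: for each $i\in\{1,\ldots,n-1\}$ the element $k_i n+i$ is congruent to $i$ modulo $n$, and since $\mathrm{Ap}(S,n)$ has exactly $n$ elements (Proposition~\ref{prop1}) with pairwise distinct residues modulo $n$, we must have $w(i)=k_i n+i$. (Strictly speaking we do not even need this remark for the computation, but it reassures the reader that the stated form is legitimate.)

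Next I would simply sum the elements of the Ap\'ery set:
\[
\sum_{w\in\mathrm{Ap}(S,n)} w \;=\; 0+\sum_{i=1}^{n-1}\bigl(k_i n+i\bigr)
\;=\; n\sum_{i=1}^{n-1} k_i \;+\; \sum_{i=1}^{n-1} i
\;=\; n\sum_{i=1}^{n-1} k_i \;+\; \frac{n(n-1)}{2}.
\]
Dividing by $n$ gives $\frac1n\sum_{w\in\mathrm{Ap}(S,n)} w = \sum_{i=1}^{n-1} k_i + \frac{n-1}{2}$, and then part~(2) of Proposition~\ref{prop2} yields
\[
\mathrm{g}(S)=\frac1n\Bigl(\sum_{w\in\mathrm{Ap}(S,n)} w\Bigr)-\frac{n-1}{2}
=\sum_{i=1}^{n-1} k_i = k_1+k_2+\cdots+k_{n-1},
\]
as claimed.

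There is no real obstacle here: the argument is a one-line substitution into Proposition~\ref{prop2} together with the elementary identity $\sum_{i=1}^{n-1} i=\binom{n}{2}$. The only point deserving a word of care is the consistency of the indexing (that the term with residue $i$ is exactly $k_i n+i$), which is immediate from Proposition~\ref{prop1}. Accordingly I would keep the proof to a couple of sentences.
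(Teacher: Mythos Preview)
Your proof is correct and is exactly the approach the paper takes: the paper simply remarks that the lemma is ``an easy consequence of Proposition~\ref{prop2}'' without spelling out the computation, and you have filled in precisely that computation.
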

	
	By Theorem~\ref{thm08} and Lemma~\ref{lem19}, we can deduce the following result.
	
	\begin{lemma}\label{lem20}
		If $a\in\mathbb{N}\setminus\{0,1\}$, then $\mathrm{g}(S(a)) = \sum_{x=1}^{l_a-1} \beta(x)$.
	\end{lemma}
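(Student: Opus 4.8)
The plan is to deduce this directly from Theorem~\ref{thm08} and Lemma~\ref{lem19}, with essentially no extra work. By Theorem~\ref{thm08}, for $a\in\mathbb{N}\setminus\{0,1\}$ we have the explicit description
\[ \mathrm{Ap}(S(a),l_a) = \{0, w(1), \ldots, w(l_a-1)\}, \quad w(x) = \beta(x)l_a + x. \]
First I would observe that each $w(x)$ is already written in exactly the shape required by Lemma~\ref{lem19}: since $0\leq x\leq l_a-1$, the summand $\beta(x)l_a$ plays the role of the "multiple of $n=l_a$" part and $x$ is the residue, so with the choice $k_x := \beta(x)$ for $x\in\{1,\ldots,l_a-1\}$ (note $\beta(x)\in\mathbb{N}$ by Remark~\ref{rem00}) the Apéry set takes the form $\{0, k_1l_a+1, k_2l_a+2, \ldots, k_{l_a-1}l_a+(l_a-1)\}$. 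Lemma~\ref{lem19} then yields at once $\mathrm{g}(S(a)) = k_1+\cdots+k_{l_a-1} = \sum_{x=1}^{l_a-1}\beta(x)$, which is the claim.

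The only thing that needs a word of justification is that $w(x)$ really is congruent to $x$ modulo $l_a$ and that $\beta(x)$ is a non-negative integer, but both are transparent from the formula $w(x)=\beta(x)l_a+x$; hence there is no genuine obstacle and the lemma is a straightforward corollary. If one prefers to bypass Lemma~\ref{lem19} altogether, I would instead appeal to Proposition~\ref{prop2}(2) directly: using Theorem~\ref{thm08},
\[ \sum_{w\in\mathrm{Ap}(S(a),l_a)} w = \sum_{x=1}^{l_a-1}\bigl(\beta(x)l_a + x\bigr) = l_a\sum_{x=1}^{l_a-1}\beta(x) + \frac{(l_a-1)l_a}{2}, \]
so dividing by $l_a$ and subtracting $\frac{l_a-1}{2}$ leaves precisely $\sum_{x=1}^{l_a-1}\beta(x)$. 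Either route gives a one-line proof.
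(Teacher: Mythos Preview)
Your proposal is correct and follows exactly the same approach as the paper, which simply states that the lemma is deduced from Theorem~\ref{thm08} and Lemma~\ref{lem19}. Your write-up merely makes explicit the identification $k_x=\beta(x)$ (and the optional alternative via Proposition~\ref{prop2}(2) is also valid), so there is nothing to add.
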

	
	Let $B(x)$ be the set associated to $x\in\{1,\ldots,l_a-1\}$ in Remark~\ref{rem17}. Then it is clear that $\#(B(x)) = \beta(x)$. This fact, together Proposition~\ref{prop18} and Lemma~\ref{lem20}, leads to the next result, which is true for $a=3$ by direct substitution. 
	
	\begin{proposition}\label{prop21}
		If $a\in\mathbb{N}\setminus\{0,1,2\}$, then $\mathrm{g}(S(a)) = \sum_{i=1}^{\left\lfloor \frac{a+1}{2} \right\rfloor} i\binom{a+1-i}{i} - \sum_{i=0}^{\left\lfloor \frac{a-3}{2} \right\rfloor} (i+2)\binom{a-3-i}{i}$.
	\end{proposition}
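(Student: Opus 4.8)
The plan is to reduce the whole statement to a binomial-coefficient bookkeeping built on the three facts already in place. By Lemma~\ref{lem20} we have $\mathrm{g}(S(a))=\sum_{x=1}^{l_a-1}\beta(x)$; by the observation just preceding this proposition, $\beta(x)=\#(B(x))$, where $B(x)$ is the subset of $\{0,\ldots,a-1\}$ attached to $x$ in Remark~\ref{rem17}; and, again by Remark~\ref{rem17} (together with the injectivity recorded in Proposition~\ref{prop18}), the assignment $x\mapsto B(x)$ is a bijection of $\{1,\ldots,l_a-1\}$ onto $\mathcal{L}(a)$. Hence, for $a\geq4$,
\[ \mathrm{g}(S(a)) \;=\; \sum_{B\in\mathcal{L}(a)}\#(B) \;=\; \sum_{m\geq1} m\cdot\#\{B\in\mathcal{L}(a)\mid \#(B)=m\}. \]
The excluded case $a=3$ I would dispatch directly: there $l_3=4$, $\beta(1)=\beta(2)=\beta(3)=1$, so $\mathrm{g}(S(3))=3$, which is exactly the value of the claimed right-hand side at $a=3$.

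For $a\geq4$ the heart of the matter is to count, for each $m$, the admissible size-$m$ sets, i.e.\ the $B\subseteq\{0,\ldots,a-1\}$ with $\#(B)=m$, no two consecutive elements, and $\{0,2\}\not\subseteq B$ (this last condition encodes the clause $b_0b_2=0$ of the Zeckendorf--Lucas decomposition; note that if $\{0,2\}\subseteq B$ and $B$ has no two consecutive elements, then $1,3\notin B$ automatically). By Lemma~\ref{lem16} there are $\#(\mathcal{F}_a(m))=\binom{a+1-m}{m}$ sets with no two consecutive elements, and among these the forbidden ones — those with $\{0,2\}\subseteq B$ — are in bijection with $\mathcal{F}_{a-4}(m-2)$ via $B\mapsto(B\setminus\{0,2\})-4$ (the shifted set lies in $\{0,\ldots,a-5\}$, has $m-2$ elements, and no two consecutive; the inverse is $B'\mapsto\{0,2\}\cup(B'+4)$). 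So the number of admissible size-$m$ sets is $\binom{a+1-m}{m}-\binom{(a-4)+1-(m-2)}{m-2}=\binom{a+1-m}{m}-\binom{a-1-m}{m-2}$, again by Lemma~\ref{lem16}. Summing with the weight $m$,
\[ \mathrm{g}(S(a)) \;=\; \sum_{m\geq1} m\binom{a+1-m}{m} \;-\; \sum_{m\geq2} m\binom{a-1-m}{m-2}, \]
and the substitution $i=m-2$ turns the second sum into $\sum_{i\geq0}(i+2)\binom{a-3-i}{i}$. It remains only to record that the nonzero terms are precisely $1\leq m\leq\lfloor(a+1)/2\rfloor$ in the first sum (since $\mathcal{F}_a(m)=\emptyset$ otherwise) and $0\leq i\leq\lfloor(a-3)/2\rfloor$ in the second (since $\mathcal{F}_{a-4}(i)=\emptyset$ otherwise), which yields exactly the stated identity.

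The only place where something could genuinely go wrong is the identification of the forbidden family: one must be careful that it is the condition $\{0,2\}\subseteq B$ that is subtracted — not merely the non-consecutiveness condition, which is already built into $\mathcal{F}_a(m)$ — and that $B\mapsto(B\setminus\{0,2\})-4$ is an honest bijection onto $\mathcal{F}_{a-4}(m-2)$ (surjectivity needs that prepending $\{0,2\}$ to a set of integers all $\geq4$ cannot create two consecutive elements, which holds thanks to the gap between $2$ and $4$; injectivity is clear). Everything else — the two applications of Lemma~\ref{lem16}, the change of index, and the determination of the ranges of summation — is routine, and I would also sanity-check the formula on the worked example $a=6$, where it must return $\mathrm{g}(S(6))=\sum_{x=1}^{17}\beta(x)=30$.
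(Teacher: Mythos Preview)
Your proof is correct and follows essentially the same approach as the paper, which merely cites Lemma~\ref{lem20}, Remark~\ref{rem17}, Proposition~\ref{prop18}, and Lemma~\ref{lem16} and asserts the formula (with a separate check at $a=3$). You have simply made explicit the counting step the paper leaves implicit, in particular the bijection $B\mapsto(B\setminus\{0,2\})-4$ identifying the excluded subsets with $\mathcal{F}_{a-4}(m-2)$.
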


	In fact, we can explicitly compute $\mathrm{g}(S(a))$ in the above proposition.
	
	\begin{theorem}\label{thm22}
		If $a\in\mathbb{N}\setminus\{0,1\}$, then $\mathrm{g}(S(a)) = \frac{a}{5} (l_{a} + l_{a-2})$.
	\end{theorem}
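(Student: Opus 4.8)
The strategy is to turn Proposition~\ref{prop21} into a closed form by evaluating its two binomial sums. The key auxiliary fact I would prove first is the identity
\[ T(n) := \sum_{i=0}^{\lfloor n/2\rfloor} i\binom{n-i}{i} = \frac{n\,l_n - f_n}{5} \qquad\text{for all } n\in\mathbb{N}, \]
together with the classical $\sum_{i=0}^{\lfloor n/2\rfloor}\binom{n-i}{i} = f_{n+1}$. To obtain the formula for $T(n)$ I would apply Pascal's rule $\binom{n-i}{i}=\binom{n-1-i}{i}+\binom{n-1-i}{i-1}$ and reindex the second piece to get the recurrence $T(n)=T(n-1)+T(n-2)+f_{n-1}$ for $n\geq 2$, with $T(0)=T(1)=0$; then a short computation using only $l_n=f_{n-1}+f_{n+1}$ and the Fibonacci/Lucas recurrences shows that $\frac{n\,l_n-f_n}{5}$ satisfies the same recurrence and the same initial conditions, so the two sequences coincide.

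With this in hand, I would rewrite the right-hand side of Proposition~\ref{prop21} (valid for $a\geq 3$). The first sum is literally $T(a+1)$. In the second sum I split $(i+2)\binom{a-3-i}{i} = i\binom{a-3-i}{i} + 2\binom{a-3-i}{i}$, so that it becomes $T(a-3) + 2f_{a-2}$. Hence
\[ \mathrm{g}(S(a)) = T(a+1) - T(a-3) - 2f_{a-2} = \frac{(a+1)\,l_{a+1} - f_{a+1}}{5} - \frac{(a-3)\,l_{a-3} - f_{a-3}}{5} - 2f_{a-2}. \]

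It remains to reduce this to $\frac{a}{5}(l_a+l_{a-2})$. I would anchor the simplification on the identity $l_{a+1}-l_{a-3} = l_a + l_{a-2} = 5f_{a-1}$, which follows from the Lucas recurrence and from $l_{n+1}+l_{n-1}=5f_n$; in particular the target equals $a f_{a-1}$. Writing $a+1=(a-3)+4$ converts $(a+1)l_{a+1}-(a-3)l_{a-3}$ into $5(a-3)f_{a-1}+4l_{a+1}$, and after replacing $l_{a+1}$ by $f_{a+1}+2f_a$ and expanding everything in terms of $f_{a-2}$ and $f_{a-3}$, the leftover terms $4l_{a+1}-f_{a+1}+f_{a-3}-10f_{a-2}$ collapse to $15f_{a-1}$. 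Adding up gives $5\,\mathrm{g}(S(a)) = 5a f_{a-1}$, i.e. $\mathrm{g}(S(a)) = a f_{a-1} = \frac{a}{5}(l_a+l_{a-2})$, as claimed. The case $a=2$ is not covered by Proposition~\ref{prop21} and would be settled by direct inspection: $S(2)=\langle 3,4,5\rangle$ has $\mathbb{N}\setminus S(2)=\{1,2\}$, so $\mathrm{g}(S(2))=2=\frac25(l_2+l_0)$.

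The only real work, and the main obstacle, is the last paragraph: it is a chain of Fibonacci--Lucas conversions that is entirely routine but easy to get wrong, so I would keep it short by routing everything through the single identity $l_a+l_{a-2}=5f_{a-1}$. A minor point to get right is that the parity-dependent upper limits in Proposition~\ref{prop21} agree with $\lfloor n/2\rfloor$ for the relevant values of $n$ (so that the sums there really are $T(a+1)$, $T(a-3)$ and $f_{a-2}$), which holds because $\binom{m}{k}=0$ whenever $0\le m<k$.
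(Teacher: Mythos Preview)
Your proof is correct, but it takes a different route from the paper's. The paper also starts from Proposition~\ref{prop21}, but instead of evaluating the two binomial sums separately it applies Pascal's rule directly to the full expression for $\mathrm{g}(S(a))$ and reindexes to obtain the recurrence $\mathrm{g}(S(a))=\mathrm{g}(S(a-1))+\mathrm{g}(S(a-2))+l_{a-2}$; then it verifies by induction that $\frac{a}{5}(l_a+l_{a-2})$ satisfies this recurrence, using $3l_{a-2}-l_{a-1}-l_{a-3}-2l_{a-4}=0$. Your approach instead isolates the reusable identity $T(n)=\sum_{i}i\binom{n-i}{i}=\frac{n\,l_n-f_n}{5}$, proves it once via its own recurrence, and then substitutes to reduce everything to Fibonacci--Lucas algebra anchored on $l_a+l_{a-2}=5f_{a-1}$. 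The paper's route keeps the induction at the level of $\mathrm{g}(S(a))$ and avoids introducing an auxiliary sequence, which is slightly more self-contained; your route has the virtue that the closed form for $T(n)$ is an independent lemma and the remaining work is pure identity-chasing rather than an induction on $a$. Both arguments ultimately hinge on the same Pascal-rule recurrence and the same Lucas--Fibonacci relation, so the difference is organizational rather than conceptual.
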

	
	\begin{proof}
		By direct substitution, we can check the result for $a\in\{2,3,4\}$. To prove it for $a\geq 5$, let us first see that
		\[ \mathrm{g}(S(a)) = \mathrm{g}(S(a-1)) + \mathrm{g}(S(a-2)) + l_{a-2}. \]
		
		For $k\in\mathbb{N}\setminus\{0\}$, let us take $a=2k+3$. Then, by Proposition~\ref{prop21}, we have that $\mathrm{g}(S(a)) = \mathrm{g}(S(2k+3)) = \sum_{i=1}^{k+2} i\binom{2k+4-i}{i} - \sum_{i=0}^{k} (i+2)\binom{2k-i}{i}$ and, hereafter,
		\[ \begin{split}
			\mathrm{g}(S(a)) = \, & \sum_{i=1}^{k+1} i \binom{2k+3-i}{i} + \sum_{i=1}^{k+1} i\binom{2k+3-i}{i-1} + (k+2)\binom{k+2}{k+2} \\
			& - \sum_{i=0}^{k-1} (i+2) \binom{2k-1-i}{i} - \sum_{i=1}^{k-1} (i+2) \binom{2k-1-i}{i-1} - (k+2)\binom{k}{k} \\
			= \, & \mathrm{g}(S(2k+2)) + \sum_{i=0}^{k+1} (i+1) \binom{2k+2-i}{i} - \sum_{i=0}^{k-1} (i+3) \binom{2k-2-i}{i} \\
			= \, & \mathrm{g}(S(2k+2)) + \mathrm{g}(S(2k+1)) + \sum_{i=0}^{k+1} \binom{2k+2-i}{i} - \sum_{i=0}^{k-1} \binom{2k-2-i}{i} \\
			= \, & \mathrm{g}(S(2k+2)) + \mathrm{g}(S(2k+1)) + f_{2k+3} - f_{2k-1} \\
			= \, & \mathrm{g}(S(a-1)) + \mathrm{g}(S(a-2)) + l_{a-2}.
		\end{split} \]
		
		If $a=2k+4$, with $k\in\mathbb{N}\setminus\{0\}$, the equality check is similar, so we omit it.
		
		To conclude that $\mathrm{g}(S(a)) = \frac{a}{5} (l_{a} + l_{a-2})$, we use mathematical induction. Let us take $a\geq5$ and assume that the equality is true for all $k\in\{3,4,\ldots,a-1\}$. Then,
		\[ \begin{split}
			\mathrm{g}(S(a)) & = \mathrm{g}(S(a-1)) + \mathrm{g}(S(a-2)) + l_{a-2} \\
			& = \frac{a-1}{5} \left(l_{a-1} + l_{a-3} \right) + \frac{a-2}{5} \left(l_{a-2} + l_{a-4} \right) + l_{a-2} \\
			& = \frac{a}{5} \left( l_{a-1} + l_{a-3} + l_{a-2} + l_{a-4} \right) + \frac{ 5l_{a-2} - l_{a-1} - l_{a-3} - 2l_{a-2} -2l_{a-4} }{5} \\
			& = \frac{a}{5} (l_{a} + l_{a-2}),
		\end{split} \]
		because $3l_{a-2} - l_{a-1} - l_{a-3} -2l_{a-4} = 0$.
	\end{proof}
	
	\begin{remark}
		If we take $l_{-1}=l_1-l_0=-1$, then Theorem~\ref{thm22} is true for $a=1$.
	\end{remark}
	
	\begin{example}\label{exmp23}
		By Example~\ref{exmp04}, we know that $S(6)=\langle 18,19,20,21,22,25,29 \rangle$. From Theorem~\ref{thm22}, we have that $\mathrm{g}(S(6)) = \frac{6}{5} (l_{6} + l_{4}) = 30$.
	\end{example}
	
	Since we know explicitly the expressions for the embedding dimension, the Frobenius number, and the genus of $S(a)$, we can check that this family of numerical semigroups satisfies the Wilf's conjecture (see \cite{wilf}). If $S$ is a numerical semigroup, then we denote by $\mathrm{n}(S)$ the cardinality of the set $\{s\in S \mid s<\mathrm{F}(S) \}$.
	
	\begin{corollary}\label{cor24}
		If $a\in\mathbb{N}$, then $\mathrm{F}(S(a)) + 1 \leq \mathrm{e}(S(a)) \mathrm{n}(S(a))$. 
	\end{corollary}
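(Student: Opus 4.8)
The plan is to convert Wilf's inequality into a single elementary inequality relating a Lucas number and a Fibonacci number, and then dispose of it by a parity case analysis. The first ingredient is the standard identity $\mathrm{n}(S) = \mathrm{F}(S) + 1 - \mathrm{g}(S)$, valid for any numerical semigroup $S$: the block of integers $\{0, 1, \ldots, \mathrm{F}(S)\}$ has $\mathrm{F}(S)+1$ elements, exactly $\mathrm{g}(S)$ of which are gaps (every gap is $\leq \mathrm{F}(S)$), so the remaining $\mathrm{F}(S)+1-\mathrm{g}(S)$ lie in $S$, and since $\mathrm{F}(S)\notin S$ they are precisely the elements of $S$ below $\mathrm{F}(S)$. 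For $a\in\{0,1\}$ the corollary is immediate ($S(0)=\langle 2,3\rangle$ gives $\mathrm{F}(S(0))+1=2=2\cdot 1$, and $S(1)=\mathbb{N}$ is trivial), so I would assume $a\geq 2$ from here on.

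With $\mathrm{n}(S(a)) = \mathrm{F}(S(a))+1-\mathrm{g}(S(a))$, the desired bound $\mathrm{F}(S(a))+1\leq \mathrm{e}(S(a))\,\mathrm{n}(S(a))$ is equivalent to $\mathrm{e}(S(a))\,\mathrm{g}(S(a)) \leq (\mathrm{e}(S(a))-1)(\mathrm{F}(S(a))+1)$. Next I would insert the known values: $\mathrm{e}(S(a))=a+1$ by Corollary~\ref{cor03}, $\mathrm{F}(S(a))+1 = \lceil\frac{a-1}{2}\rceil l_a$ by Theorem~\ref{thm13}, and $\mathrm{g}(S(a)) = \frac{a}{5}(l_a+l_{a-2})$ by Theorem~\ref{thm22}. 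Using the elementary identity $l_a+l_{a-2}=5f_{a-1}$ (which drops out of $l_n=f_{n-1}+f_{n+1}$ together with the Fibonacci recurrence), this genus becomes $\mathrm{g}(S(a))=af_{a-1}$, so after cancelling the positive factor $a$ the whole statement reduces to
\[ (a+1)\,f_{a-1} \;\leq\; \left\lceil \tfrac{a-1}{2}\right\rceil l_a . \]

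To finish, I would rewrite $l_a = f_{a+1}+f_{a-1} = f_a + 2f_{a-1}$, so the inequality becomes $\bigl(a+1-2\lceil\frac{a-1}{2}\rceil\bigr)f_{a-1} \leq \lceil\frac{a-1}{2}\rceil f_a$, and then split on the parity of $a$. For $a=2k$ the coefficient on the left is $1$ and the claim is $f_{2k-1}\leq k f_{2k}$, which holds since $k\geq 1$ and $f_{2k-1}\leq f_{2k}$. For $a=2k+1$ (so $k\geq 1$) the coefficient is $2$ and the claim is $2f_{2k}\leq k f_{2k+1}$, which is an equality when $k=1$ and follows from $f_{2k}\leq f_{2k+1}$ and $k\geq 2$ otherwise. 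I do not foresee a genuine obstacle: granted Corollary~\ref{cor03} and Theorems~\ref{thm13} and~\ref{thm22}, the argument is forced, and the only points needing a little care are the reduction identity for $\mathrm{n}(S)$, the simplification $l_a+l_{a-2}=5f_{a-1}$, and carrying the ceiling correctly through the two parities — where, notably, $a\in\{2,3\}$ yield equality in Wilf's inequality, so the bound is already tight for small $a$.
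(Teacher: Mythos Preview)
Your argument is correct and follows the paper's approach exactly up to the reduced inequality $(a+1)\frac{a(l_a+l_{a-2})}{5}\le a\lceil\frac{a-1}{2}\rceil l_a$; the only difference is in how the final inequality is verified. The paper bounds $l_{a-2}\le l_a$ and then checks $\frac{2}{5}(a+1)\le\frac{1}{2}(a-1)$ for $a\ge 9$ together with a direct verification for $a\in\{2,\ldots,8\}$, whereas your use of the identity $l_a+l_{a-2}=5f_{a-1}$ and the substitution $l_a=f_a+2f_{a-1}$ gives a tidier parity split that works uniformly for all $a\ge 2$ without any finite case-check.
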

	
	\begin{proof}
		If $a\in\{0,1\}$, then $S(0) = \langle 2,3 \rangle$ and $S(1) = \mathbb{N}$. Therefore, the result is obvious in these cases.
		
		If $a\geq2$, we use an equivalent inequality. Indeed, since $\mathrm{g}(S) + \mathrm{n}(S) = \mathrm{F}(S) + 1$ for any numerical semigroup $S$, then
		\[ \mathrm{F}(S) + 1 \leq \mathrm{e}(S) \mathrm{n}(S) \Leftrightarrow \mathrm{e}(S) \mathrm{g}(S) \leq \left( \mathrm{e}(S)-1 \right) \left( \mathrm{F}(S)+1 \right). \]
		Now, by Corollary~\ref{cor03}, Theorem~\ref{thm13}, and Theorem~\ref{thm22}, we have that 
		\[ \mathrm{e}(S(a)) \mathrm{g}(S(a)) \leq \left( \mathrm{e}(S(a))-1 \right) \left( \mathrm{F}(S(a))+1 \right) \Leftrightarrow \]
		\[ (a+1) \frac{a(l_a+l_{a-2})}{5} \leq a \left\lceil \frac{a-1}{2} \right\rceil l_a. \]
		By direct verification, the last inequality is true for $a\in\{2,3,\ldots,8\}$. If $a\geq9$, since $l_{a-2}\leq l_a$, it suffices to see that $\frac{2}{5}(a+1)\leq \frac{1}{2}(a-1)$, which is obviously true.
	\end{proof}

	\section{Another family of numerical semigroups related to the Lucas series}\label{another}
	
	If $a\in\mathbb{N}$, then we denote by $T(a)$ the numerical semigroup generated by $\{l_a+l_n \mid n\in\mathbb{N}\}$. Thus, $T(0) = \langle 3,4,5 \rangle$, $T(1) = \langle 2,3 \rangle$, $T(2) = \langle 4,5,6,7 \rangle$, $T(3) = \langle 5,6,7,8 \rangle$, and so on.
	
	Our first purpose in this section will be to determine the minimal system of generators of $T(a)$.
	
	\begin{lemma}\label{lem30}
		If $\{ a,k \} \in \mathbb{N} \setminus \{0,1\}$, then $kl_a \in \langle l_a+l_0, l_a+l_1, \ldots, l_a+l_a \rangle$.
	\end{lemma}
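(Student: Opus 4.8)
The plan is to reduce everything to the Lucas recurrence $l_a=l_{a-1}+l_{a-2}$ (valid since $a\geq 2$) together with a case split on the parity of $k$. The crucial observation is that the three numbers $l_a+l_a$, $l_a+l_{a-1}$, and $l_a+l_{a-2}$ all belong to the generating set $\{l_a+l_0,\dots,l_a+l_a\}$, because $0\leq a-2<a-1<a$; so any non-negative integer combination of these three numbers lies in $\langle l_a+l_0,\dots,l_a+l_a\rangle$.

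First I would dispatch the even case: if $k=2t$ with $t\geq 1$, then $kl_a=t(l_a+l_a)\in\langle l_a+l_0,\dots,l_a+l_a\rangle$ at once. For $k$ odd — which, since $k\geq 2$, forces $k\geq 3$ — I would write $k=2t+3$ with $t\geq 0$ and use the recurrence in the form $l_{a-1}+l_{a-2}=l_a$ to get
\[ (l_a+l_{a-1})+(l_a+l_{a-2}) = 2l_a+(l_{a-1}+l_{a-2}) = 3l_a. \]
Consequently $kl_a = t(l_a+l_a)+(l_a+l_{a-1})+(l_a+l_{a-2})$, which exhibits $kl_a$ as a sum of elements of the generating set, completing the proof.

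There is essentially no obstacle here: the only mildly non-obvious point is that $3l_a$ cannot be written as a sum of three of the listed generators (each generator strictly exceeds $l_a$, so any sum of three of them strictly exceeds $3l_a$), yet it \emph{is} realized as a sum of the two generators $l_a+l_{a-1}$ and $l_a+l_{a-2}$ precisely by collapsing $l_{a-1}+l_{a-2}$ into $l_a$. Everything else is immediate arithmetic, and one checks along the way that $a-2\geq 0$ so that $l_{a-2}$ is a genuine term of the Lucas series indexing an allowed generator.
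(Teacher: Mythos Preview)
Your proof is correct and follows essentially the same route as the paper: the key identity $(l_a+l_{a-1})+(l_a+l_{a-2})=3l_a$ is exactly what the paper uses, and your parity split is just an explicit unpacking of the paper's remark that every $kl_a$ with $k\geq 2$ lies in $\langle 2l_a,3l_a\rangle$. The aside about $3l_a$ not being a sum of three generators is true but unnecessary for the argument.
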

	
	\begin{proof}
		Since $3l_a=(l_a+l_{a-1})+(l_a+l_{a-2})$, then it is clear that $\{2l_a,3l_a\} \in \langle l_a+l_0, l_a+l_1, \ldots, l_a+l_a \rangle$. Moreover, $kl_a \in \langle 2l_a, 3l_a \rangle$ for all $k \in \mathbb{N} \setminus \{0,1\}$. Thereby, we have the conclusion.
	\end{proof}
	
	\begin{proposition}\label{prop31}
		If $a \in\mathbb{N} \setminus \{0,1,2\}$, then $\mathrm{msg}(T(a))=\{l_a+l_0, l_a+l_1, \ldots, l_a+l_a\}$.
	\end{proposition}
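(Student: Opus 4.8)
The plan is to establish the two halves of the statement separately: first that $G:=\{l_a+l_0,l_a+l_1,\ldots,l_a+l_a\}$ generates $T(a)$, and then that $G$ contains no redundant element. Since the minimal system of generators of a numerical semigroup is unique, the two facts together yield $\mathrm{msg}(T(a))=G$.

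For the generating half, fix $i\ge 1$; the goal is $l_a+l_{a+i}\in\langle G\rangle$. By Lemma~\ref{lem01}, $l_{a+i}=f_{i+1}l_a+f_il_{a-1}$, hence, using $f_{i+1}-f_i=f_{i-1}$,
\[ l_a+l_{a+i}=(f_{i+1}+1)l_a+f_il_{a-1}=f_i(l_a+l_{a-1})+(f_{i-1}+1)l_a. \]
When $i\ge 2$ the coefficient $f_{i-1}+1$ is at least $2$, so $(f_{i-1}+1)l_a\in\langle G\rangle$ by Lemma~\ref{lem30} (applicable since $a\ge 3\ge 2$), while $f_i(l_a+l_{a-1})$ is a non-negative multiple of the generator $l_a+l_{a-1}\in G$; adding, $l_a+l_{a+i}\in\langle G\rangle$. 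The case $i=1$ is exactly where this breaks: the coefficient of the ``loose'' $l_a$ is then $1$, and $l_a\notin T(a)$. Here I would instead use $l_{a-1}=l_{a-2}+l_{a-3}$ --- valid because $a\ge 3$, which is precisely why $a=2$ must be excluded --- to write
\[ l_a+l_{a+1}=2l_a+l_{a-1}=(l_a+l_{a-2})+(l_a+l_{a-3}), \]
a sum of two elements of $G$. Hence $\langle G\rangle=T(a)$.

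For minimality, I would use a size estimate. Every element of $G$ lies in the interval $(l_a,2l_a]$: the smallest is $l_a+l_1=l_a+1$ and the largest is $l_a+l_a=2l_a$, since among $l_0,\ldots,l_a$ the minimum Lucas number is $l_1=1$ and the maximum is $l_a$. On the other hand, any sum of two or more elements of $G$ is at least $2(l_a+1)=2l_a+2>2l_a$. Therefore no $l_a+l_j$ can be a sum of two or more elements of $G$, and it cannot equal a single other element of $G$ because $l_0,\ldots,l_a$ are pairwise distinct; so if some proper subset of $G$ generated $T(a)$ we would reach a contradiction. Thus $G$ is minimal, and hence $G=\mathrm{msg}(T(a))$.

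I expect the generating step, and specifically the exceptional value $i=1$, to be the main obstacle: the uniform computation via Lemmas~\ref{lem01} and \ref{lem30} leaves a coefficient $1$ on a term $l_a$ that does not belong to $T(a)$, so this single case must be handled by an ad hoc decomposition, and it is exactly at this point that the hypothesis $a\ge 3$ is indispensable. The remaining cases of the generating step and the entire minimality argument are routine once the correct decompositions are identified.
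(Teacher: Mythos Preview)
Your proof is correct and follows essentially the same route as the paper: the generating step handles $i\ge 2$ via Lemma~\ref{lem01} and Lemma~\ref{lem30} and treats $i=1$ by the ad hoc identity $l_a+l_{a+1}=(l_a+l_{a-2})+(l_a+l_{a-3})$, exactly as in the paper, while your minimality argument (all elements of $G$ lie in $(l_a,2l_a]$ so none is a nontrivial sum of others) is just a rephrasing of the paper's observation that $l_a+l_1<l_a+l_0<\cdots<l_a+l_a<2(l_a+l_1)$.
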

	
	\begin{proof}
		Firstly, let us see that $l_a+l_{a+i} \in \langle l_a+l_0, l_a+l_1, \ldots, l_a+l_a \rangle$ for all $i\in\mathbb{N}\setminus\{0\}$. If $i=1$, then $l_a+l_{a+1} = (l_a+l_{a-2})+(l_a+l_{a-3})$ and the case is clear. Now, by Lemma~\ref{lem01}, if $i\geq 2$, then $l_a+l_{a+i} = l_a + f_{i+1}l_a + f_i l_{a-1} = (f_{i+1} - f_i + 1) l_a + f_i(l_a+l_{a-1})$. Since $f_{i+1} - f_i + 1\geq 2$ for all $i\geq 2$, we get the conclusion by applying Lema~\ref{lem30}.
		
		Finally, since $l_a+l_1 < l_a+l_0 < l_a+l_2 < \cdots < l_a+l_a < 2(l_a+l_1)$, the we easily deduce that $\{l_a+l_0, l_a+l_1, \ldots, l_a+l_a\}$ is the minimal system of generators of $T(a)$.
	\end{proof}
	
	As an immediate consequence of the above proposition we have the following result.
	
	\begin{corollary}\label{cor32}
		If $a \in\mathbb{N} \setminus \{0,1,2\}$, then $\mathrm{e}(T(a)) = a+1$.
	\end{corollary}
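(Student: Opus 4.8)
The plan is to read this off directly from Proposition~\ref{prop31}, which has already done all the work: it asserts that for $a\in\mathbb{N}\setminus\{0,1,2\}$ the minimal system of generators of $T(a)$ is exactly $\{l_a+l_0,l_a+l_1,\ldots,l_a+l_a\}$. Since $\mathrm{e}(T(a))$ is by definition the cardinality of $\mathrm{msg}(T(a))$, all that remains is to count the elements of this finite set.

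The indices in the list run over $0,1,\ldots,a$, so there are $a+1$ of them; it only has to be checked that the $a+1$ corresponding integers $l_a+l_0,\ldots,l_a+l_a$ are pairwise distinct, equivalently that $l_0,l_1,\ldots,l_a$ are pairwise distinct. For $a\geq 3$ this is immediate from the ordering $l_1<l_0<l_2<l_3<\cdots<l_a$ that was used already in the proof of Proposition~\ref{prop31} (recall $l_0=2>l_1=1$ and $l_{n+1}>l_n$ for $n\geq 2$). Hence $\#(\mathrm{msg}(T(a)))=a+1$, that is, $\mathrm{e}(T(a))=a+1$. There is essentially no obstacle here; the only subtlety worth flagging is the anomalous ordering of the first two Lucas numbers, which is also precisely why the cases $a\in\{0,1,2\}$ must be excluded (for instance one checks that $\mathrm{msg}(T(2))=\{4,5,6,7\}$, which has four elements rather than three).
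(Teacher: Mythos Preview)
Your proof is correct and follows the paper's approach: the corollary is stated there as an immediate consequence of Proposition~\ref{prop31}, and you simply spell out the count $\#\{l_a+l_0,\ldots,l_a+l_a\}=a+1$ together with the distinctness check. One small quibble with your closing parenthetical: the exclusion of $a=2$ is not really about the ordering anomaly of $l_0,l_1$ (that ordering is still fine for $a=2$) but about Proposition~\ref{prop31} itself failing there---$\{l_2+l_0,l_2+l_1,l_2+l_2\}=\{4,5,6\}$ does not generate $T(2)=\langle 4,5,6,7\rangle$---so the premise, not the counting, breaks down.
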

	
	Let us note that the structure of the Ap\'ery set of the numerical semigroups $T(a)$ with respect to its multiplicity (that is, the least element of $T(a)\setminus\{0\}$) is quite similar to the structure of the Ap\'ery set previously studied for the numerical semigroup $S(a)$. However, there are subtle differences that make it difficult to explicitly give such a set for $T(a)$. Nevertheless, we can avoid this obstacle by seeing that $T(a)\subsetneq S(a)$ and showing the elements of $S(a)\setminus T(a)$. For this we need the following lemma.
	
	\begin{lemma}\label{lem33}
		If $a \in\mathbb{N} \setminus \{0,1,2\}$ and $x\in T(a) \setminus \{0,l_a+l_1\}$, then $x+l_a\in T(a)$.
	\end{lemma}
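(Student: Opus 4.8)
The plan is to work directly from the explicit minimal system of generators provided by Proposition~\ref{prop31}, namely $\mathrm{msg}(T(a)) = \{l_a+l_0, l_a+l_1, \ldots, l_a+l_a\}$, and to avoid induction entirely. Since $x\in T(a)\setminus\{0\}$, fix one representation $x = \sum_{i=0}^{a} c_i(l_a+l_i)$ with $c_i\in\mathbb{N}$ and $\sum_{i=0}^a c_i \geq 1$. The underlying idea is to \emph{absorb} the extra copy of $l_a$ by merging it with one generator already occurring in this representation.

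First I would record the small arithmetic identities that make the absorption work, all valid for $a\geq 3$ (so that $l_a+l_0,\ldots,l_a+l_a$ are indeed elements of $T(a)$). From $l_0 = 2 = 2l_1$ one gets $2l_a+l_0 = 2(l_a+l_1)\in T(a)$; from the Lucas recurrence, $l_j = l_{j-1}+l_{j-2}$ for $j\geq 2$, so $2l_a+l_j = (l_a+l_{j-1})+(l_a+l_{j-2})\in T(a)$ for every $j\in\{2,\ldots,a\}$ (note $j-1,j-2\in\{0,\ldots,a\}$, so both summands are genuine generators). Hence $2l_a+l_j\in T(a)$ for all $j\in\{0,2,3,\ldots,a\}$. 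Similarly, $3l_a+2 = (l_a+l_a)+(l_a+l_0)\in T(a)$. These are in the spirit of Lemma~\ref{lem30}, but written in a form adapted to what follows.

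Then I split into two cases according to the fixed representation of $x$. If some $c_j\geq 1$ with $j\neq 1$, write $x = (l_a+l_j) + x'$, where $x' := x - (l_a+l_j)$ is the sum of the remaining generators, so $x'\in T(a)\cup\{0\}$; then $x+l_a = (2l_a+l_j) + x'\in T(a)$ by the previous paragraph. Otherwise the representation uses only $l_a+l_1$, i.e.\ $x = c_1(l_a+l_1)$, and since $x\notin\{0,l_a+l_1\}$ we have $c_1\geq 2$; then $x+l_a = (3l_a+2) + (c_1-2)(l_a+l_1)\in T(a)$, using $3l_a+2\in T(a)$ and $(c_1-2)(l_a+l_1)\in T(a)\cup\{0\}$.

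The only delicate point — and the thing one must not overlook — is the second case: an element of $T(a)$ may admit a representation using the generator $l_a+l_1$ exclusively, and such an element cannot be handled by deleting a ``good'' generator $l_a+l_j$ with $j\neq 1$. This is exactly why $l_a+l_1$ must be excluded in the statement, and it is disposed of separately through $3l_a+2 = (l_a+l_a)+(l_a+l_0)$. Everything else is routine bookkeeping with the Lucas recurrence.
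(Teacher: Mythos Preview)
Your argument is correct and follows the same mechanism as the paper's proof: both absorb the extra $l_a$ into one occurring generator via $2l_a+l_0 = 2(l_a+l_1)$ and $2l_a+l_j = (l_a+l_{j-1})+(l_a+l_{j-2})$ for $j\geq 2$.

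In fact your version is \emph{more complete} than the paper's. The paper asserts that for every $x\in T(a)\setminus\{0,l_a+l_1\}$ one can find $i\in\{0,2,\ldots,a\}$ with $x-(l_a+l_i)\in T(a)$, and then treats only those two cases. But this fails for $x=c_1(l_a+l_1)$ with $c_1\geq 2$: for instance, with $a=3$ one has $T(3)=\langle 5,6,7,8\rangle$ and $x=10$ cannot be written as $(l_3+l_i)+y$ with $i\neq 1$ and $y\in T(3)$. Your explicit second case, using $3l_a+2=(l_a+l_a)+(l_a+l_0)\in T(a)$ to rewrite $c_1(l_a+l_1)+l_a=(3l_a+2)+(c_1-2)(l_a+l_1)$, is exactly what is needed to close this gap. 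So the split you flagged as ``the only delicate point'' is not merely a stylistic refinement---it is essential for the lemma as stated.
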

	
	\begin{proof}
		By applying Proposition~\ref{prop31}, if $x\in T(a) \setminus \{0,l_a+l_1\}$, then we deduce that there exists $i\in\{0,2,\ldots,a\}$ such that $x=(l_a+l_i)+(x-l_a-l_i)$ and $x-l_a-l_i \in T(a)$. We distinguish two cases.
		\begin{enumerate}
			\item If $i=0$, then $(l_a+l_0)+l_a = (l_a+l_1) + (l_a+l_1) \in T(a)$ and, consequently, $x+l_a\in T(a)$.
			\item If $i\geq 2$, then $(l_a+l_i)+l_a = (l_a+l_{i-1}) + (l_a+l_{i-2}) \in T(a)$ and, therefore, $x+l_a\in T(a)$.
		\end{enumerate}
	\end{proof}
	
	\begin{proposition}\label{prop34}
		If $a\in\mathbb{N}\setminus\{0,1,2\}$, then $S(a)=T(a)\cup \{l_a,2l_a+1\}$. Moreover, $T(a)\cap \{l_a,2l_a+1\}=\emptyset$.
	\end{proposition}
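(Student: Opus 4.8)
The plan is to establish the set equality $S(a) = T(a) \cup \{l_a, 2l_a+1\}$ by proving two inclusions, and separately to verify the disjointness claim. First I would dispose of the disjointness: by Proposition~\ref{prop31} every nonzero element of $T(a)$ is at least $l_a + l_1 = l_a + 1 > l_a$, so $l_a \notin T(a)$; and $2l_a + 1$ has the form $n(l_a+l_1)$ only if... — more carefully, any element of $T(a)$ that is $\le 2l_a + 1$ must be a single generator $l_a + l_i$ with $i \in \{0,1,\ldots,a\}$ or twice the smallest generator $2(l_a+l_1) = 2l_a+2 > 2l_a+1$; since $l_a + l_i \le 2l_a$ always (as $l_i \le l_a$), no generator equals $2l_a+1$, hence $2l_a+1 \notin T(a)$. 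This gives $T(a) \cap \{l_a, 2l_a+1\} = \emptyset$.

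For the inclusion $T(a) \cup \{l_a, 2l_a+1\} \subseteq S(a)$: clearly $T(a) \subseteq S(a)$ since the generators of $T(a)$ are among those of $S(a)$; also $l_a \in S(a)$ by definition; and $2l_a + 1 = l_a + (l_a + l_1) \in S(a)$. For the reverse inclusion $S(a) \subseteq T(a) \cup \{l_a, 2l_a+1\}$, I would take an arbitrary $x \in S(a)$ and use the minimal system of generators from Proposition~\ref{prop02}, writing $x = \lambda_0 l_a + \sum_{i=0}^{a-1} \mu_i (l_a + l_i)$ with all coefficients in $\mathbb{N}$. If $\lambda_0 = 0$ then $x \in T(a)$ directly. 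If $\lambda_0 \ge 1$ and $x \notin \{l_a, 2l_a+1\}$, the goal is to show $x \in T(a)$; the natural approach is induction on $\lambda_0$ (or on $x$), peeling off copies of $l_a$: it suffices to show that whenever $x \in S(a)$, $x \ge 2l_a$, and $x \ne 2l_a+1$, then $x - l_a \in$ (something that keeps us inside the target set) — or more cleanly, to show that $x - l_a \in T(a)$, using Lemma~\ref{lem33} which says $T(a)$ is "closed under adding $l_a$" except at the single point $l_a + l_1$.

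The cleanest route is: first reduce to the case $\lambda_0 = 1$ by absorbing $2l_a = (l_a+l_{a-1})+(l_a+l_{a-2})$ and $3l_a = $ (similar), i.e. use Lemma~\ref{lem30} to replace any $\lambda_0 \ge 2$ contribution by generators of $T(a)$, so WLOG $\lambda_0 \in \{0,1\}$; the case $\lambda_0=0$ is done. So assume $x = l_a + y$ with $y = \sum \mu_i(l_a+l_i) \in T(a)$. If $y = 0$ then $x = l_a$; if $y = l_a+l_1$ then $x = 2l_a + 1$; and if $y \in T(a) \setminus \{0, l_a+l_1\}$ then Lemma~\ref{lem33} gives $x = y + l_a \in T(a)$. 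That exhausts all cases, proving $S(a) \subseteq T(a) \cup \{l_a, 2l_a+1\}$. The main obstacle I anticipate is the bookkeeping in the WLOG reduction of $\lambda_0$: one must check that applying Lemma~\ref{lem30} to turn $\lambda_0 l_a$ (for $\lambda_0 \ge 2$) into a sum of $T(a)$-generators does not accidentally produce the forbidden value $l_a+l_1$ as the residual $y$ — but since that rewriting only increases the number of generators used and $l_a + l_1$ is itself a single generator $< 2l_a$, while the rewritten part contributes $\ge 2l_a$, this cannot collide, and the argument goes through.
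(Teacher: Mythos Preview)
Your proposal is correct and follows essentially the same route as the paper: both arguments establish the easy inclusion and disjointness directly, then for $x\in S(a)\setminus\{l_a,2l_a+1\}$ do a three-way case split on the number of copies of $l_a$ in a representation of $x$, invoking Lemma~\ref{lem30} for the case of two or more copies and Lemma~\ref{lem33} for exactly one copy. The only cosmetic difference is that the paper uses the Ap\'ery decomposition $x=kl_a+w$ with $w\in\mathrm{Ap}(S(a),l_a)\subseteq T(a)$ to obtain a canonical value of $k$, whereas you work with an arbitrary generator representation; your worry about ``bookkeeping in the WLOG reduction'' is unfounded, since for $\lambda_0\ge 2$ Lemma~\ref{lem30} gives $\lambda_0 l_a\in T(a)$ outright, so $x=\lambda_0 l_a+y\in T(a)+T(a)\subseteq T(a)$ with no need to reduce to $\lambda_0=1$.
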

	
	\begin{proof}
		It is clear that $T(a)\subseteq S(a)$, $\{l_a,2l_a+1\} \subseteq S(a)$, and $T(a)\cap \{l_a,2l_a+1\}=\emptyset$. Moreover, since all the elements of $\mathrm{Ap}(S(a),l_a)$ are non-negative integer linear combinations of $l_a+l_0, l_a+l_1, \ldots, l_a+l_{a-1}$, then we easily deduce that $\mathrm{Ap}(S(a),l_a)\subseteq T(a)$ from Propositions~\ref{prop02} and \ref{prop31}. Thus, to finish the proof, it suffices to see that $S(a)\setminus \{l_a,2l_a+1\}\subseteq T(a)$.
		
		If $x\in S(a)\setminus \{l_a,2l_a+1\}$, then there exist $k\in\mathbb{N}$ and $w\in\mathrm{Ap}(S(a),l_a)$ such that $x=kl_a+w$. We distinguish three cases.
		\begin{enumerate}
			\item If $k=0$, then $x=w\in \mathrm{Ap}(S(a),l_a)\subseteq T(a)$.
			\item If $k=1$, then $w\in \mathrm{Ap}(S(a),l_a)\setminus\{0,l_a+l_1\} \subseteq T(a)\setminus\{0,l_a+l_1\}$ and, by Lemma~\ref{lem33}, we conclude that $x=l_a+w\in T(a)$.
			\item If $k\geq 2$, then by Lemma~\ref{lem30} we have that $kl_a\in T(a)$. Thus, since $w\in \mathrm{Ap}(S(a),l_a) \subseteq T(a)$, we obtain that $x=kl_a+w\in T(a)$.
		\end{enumerate}
	\end{proof}
	
	As an immediate consequence of the above proposition we have the following result.
	
	\begin{corollary}\label{cor35}
		If $a\in\mathbb{N}\setminus\{0,1,2\}$, then $\mathrm{g}(T(a))=\mathrm{g}(S(a))+2=\frac{a}{5} (l_{a} + l_{a-2})+2$ and 
		\[\mathrm{F}(T(a))= \max\{\mathrm{F}(S(a)),2l_a+1\} = \left\{ \begin{array}{cl}
			2l_a+1, & \mbox{if }\, a\in\{3,4,5\}, \\[3pt] \left\lceil \frac{a-1}{2} \right\rceil l_a-1, & \mbox{if }\, a\geq 6.
		\end{array} \right. \]
	\end{corollary}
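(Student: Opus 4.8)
The plan is to read everything off directly from Proposition~\ref{prop34}, so this really is an ``immediate consequence''. Proposition~\ref{prop34} asserts that $S(a)=T(a)\cup\{l_a,2l_a+1\}$ with the union disjoint. Complementing in $\mathbb{N}$, this gives
\[ \mathbb{N}\setminus T(a) = \left(\mathbb{N}\setminus S(a)\right) \cup \{l_a,2l_a+1\}, \]
and this union is again disjoint, since $l_a,2l_a+1\in S(a)$. Taking cardinalities yields $\mathrm{g}(T(a)) = \mathrm{g}(S(a)) + 2$, and substituting the value $\mathrm{g}(S(a)) = \frac{a}{5}(l_a+l_{a-2})$ from Theorem~\ref{thm22} gives the genus formula $\mathrm{g}(T(a)) = \frac{a}{5}(l_a+l_{a-2}) + 2$.

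For the Frobenius number, the same description of $\mathbb{N}\setminus T(a)$ gives
\[ \mathrm{F}(T(a)) = \max\left(\mathbb{N}\setminus T(a)\right) = \max\left\{\mathrm{F}(S(a)),\, l_a,\, 2l_a+1\right\} = \max\left\{\mathrm{F}(S(a)),\, 2l_a+1\right\}, \]
where the last equality holds because $l_a < 2l_a+1$. It then remains to make this maximum explicit using $\mathrm{F}(S(a)) = \left\lceil \frac{a-1}{2} \right\rceil l_a - 1$ from Theorem~\ref{thm13}. The inequality $\mathrm{F}(S(a)) \geq 2l_a+1$ is equivalent to $\left\lceil \frac{a-1}{2} \right\rceil l_a \geq 2l_a + 2$, i.e. to $\left\lceil \frac{a-1}{2} \right\rceil \geq 2 + \frac{2}{l_a}$. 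Since $l_a \geq l_3 = 4$ for all $a\geq 3$, the right-hand side is at most $\frac{5}{2}$, so the inequality holds precisely when $\left\lceil \frac{a-1}{2} \right\rceil \geq 3$, that is, when $a\geq 6$. For $a\in\{3,4,5\}$ one checks directly that $\left\lceil \frac{a-1}{2} \right\rceil\in\{1,2\}$, so $\mathrm{F}(S(a)) < 2l_a+1$ and hence $\mathrm{F}(T(a)) = 2l_a+1$; for $a\geq 6$ we obtain $\mathrm{F}(T(a)) = \mathrm{F}(S(a)) = \left\lceil \frac{a-1}{2} \right\rceil l_a - 1$. This is exactly the displayed case split.

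There is essentially no obstacle: once Proposition~\ref{prop34} is in hand, the genus statement is a one-line cardinality count and the Frobenius statement reduces to the elementary comparison of $\left\lceil \frac{a-1}{2} \right\rceil l_a - 1$ with $2l_a+1$ carried out above. The only point that requires a little care is locating the threshold between the two regimes, and that is pinned down cleanly by the uniform bound $l_a\geq 4$ valid for every $a\geq 3$.
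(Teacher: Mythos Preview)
Your argument is correct and is exactly the intended one: the paper records the corollary as an immediate consequence of Proposition~\ref{prop34} (together with Theorems~\ref{thm13} and~\ref{thm22}) without giving any further details, and you have simply written those details out. The only substantive step beyond quoting the earlier results is the threshold computation comparing $\left\lceil \frac{a-1}{2} \right\rceil l_a - 1$ with $2l_a+1$, which you handle cleanly via $l_a\geq 4$ for $a\geq 3$.
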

	
	\begin{remark}
		Following Corollary~\ref{cor24}, it is clear that $T(a)$ satisfies the Wilf's conjecture for all $a\in\mathbb{N}$.
	\end{remark}

	\section*{Acknowledgement}
	
	Both authors are supported by the project MTM2017-84890-P (funded by Mi\-nis\-terio de Econom\'{\i}a, Industria y Competitividad and Fondo Europeo de Desa\-rro\-llo Regional FEDER) and by the Junta de Andaluc\'{\i}a Grant Number FQM-343.

\end{document}